\newtheorem{thm}{Theorem}[section]
\newtheorem{lemma}[thm]{Lemma}
\newtheorem{theorem}[thm]{Theorem}
\newtheorem{remark}[thm]{Remark}
\newtheorem{corollary}[thm]{Corollary}
\def \tmz {\begin{itemize}}
\def \etmz {\end{itemize}}
\def\ni{\noindent}
\numberwithin{equation}{section}
\begin{document}

\title{Universal quaternary mixed sums  involving  generalized $3$-, $4$-, $5$- and $8$-gonal  numbers via products of Ramanujan's theta functions}
\author{Nasser Abdo Saeed Bulkhali$^1$,  A. $\text{Vanitha}^{2},$ and  M. P.  $\text{Chaudhary}^{3}$ \\$^1$Department of Mathematics, Faculty of Education-Zabid,\\ Hodeidah University, Hodeidah, Yemen\\
$^{2}$  Department of  Mathematics, The National Institute of Engineering,\\ Mysuru-570008,  India\\
$^{3}$International Scientific Research and 
Welfare Organization,\\ New Delhi, India\\
\small{ \textbf{e-mails:} {nassbull@hotmail.com} ; a.vanitha4@gmail.com ; dr.m.p.chaudhary@gmail.com}}

\date{} \maketitle

\begin{abstract}
\footnotesize{ Generalized $m$-gonal numbers are those $p_m(x)= [ (m - 2)x^2 - (m - 4)x ]/2 $ where $x$ and $m$ are integers with $m \geq 3$.
If any nonnegative integer can be written in the form $ap_r(h)+bp_s(l)+cp_t(m)+dp_u(n)$, where $a,b,c,d$ are positive integers,  then we call $ap_r(h)+bp_s(l)+cp_t(m)+dp_u(n)$ a universal quaternary sum. In this paper, we determine the universality of many quaternary sums when $r,s,t,u \in \{3,4,5,8\}$, using the theory of Ramanujan's theta function identities. \\

 \noindent\textsc{2020 Mathematics Subject Classification.} 11D72, 11E20, 11E25, 11F27,  14H42.\\
 \noindent\textsc{Keywords and phrases.}
Ramanujan's theta function,  sums of squares, triangular numbers, quaternary quadratic forms. }
\end{abstract}
\section{Introduction}\label{Introduction}
Ramanujan's general theta function is defined by
\begin{equation}\label{2.1}f(a,b):= \sum_{n=-\infty}^{\infty}{{a}^{n(n+1)/2}}{{b}^{n(n-1)/2}},~~~|ab|<1.\end{equation}
Let $\mathbb{N}$ be denote the set of positive integers, let $\mathbb{N}_0=\mathbb{N} \cup \left\{ 0\right\} =\{0,1,2,3,\ldots\}$ and  $\mathbb{Z}$ be the set of all integers.
 \textit{A generalized $m$-gonal number} (or \textit{a generalized polygonal number of order} $m$), for $m \geq 3$, is defined by
$$ p_m(x) = \frac{(m - 2)x^2 - (m - 4)x}{2}, ~~ x\in \mathbb{Z}.$$
Observe that
$$p_3(x)=\frac{x(x+1)}{2},~~ p_4(x)=x^2, ~~ p_5(x)=\frac{x(3x-1)}{2} ~~ \text{and}~~ p_8(x)=x(3x-2), $$
are  the triangular numbers, the squares, the generalized  pentagonal numbers and the generalized octagonal  numbers, respectively. \\
For integer-valued polynomials $f_1(h), f_2(l), f_3(m), f_4(n)$, if
$$\{f_1(h) + f_2(l) +  f_3(m)+f_4(n) : h,l,m,n \in \mathbb{Z} \} = \mathbb{N}_0,$$
 then, we call $f_1(h) + f_2(l) +  f_3(m)+f_4(n)$ a universal sum over  $\mathbb{Z}$.\\
For integer-valued polynomials $f_1(x_1,\ldots, x_k )$ and $f_2(x_1,\ldots, x_k )$ with $f_i(\mathbb{Z}^k) \subseteq \mathbb{N}_0 $, if
$$\{f_1(x_1,\ldots, x_k ) : x_1,\ldots, x_k  \in \mathbb{Z}\} = \{f_2(x_1,\ldots, x_k ) : x_1,\ldots, x_k  \in \mathbb{Z} \}$$
then we say that $f_1(x_1,\ldots, x_k )$ is equivalent to $f_2(x_1,\ldots, x_k )$ and write $$f_1(x_1,\ldots, x_k ) \sim f_2(x_1,\ldots, x_k ).$$
Lagrange’s four square theorem states that every positive integer can be written as a sum of at most four integral squares.
In $1917$,  Ramanujan~\cite{Ramanujan} provided a list of all the $54$ universal sums $ap_4+bp_4+cp_4+dp_4$ with $1 \leq a \leq b \leq c \leq d$, and the list was confirmed in $1927$ by Dickson~\cite{Dickson}. In 2012  Clark,  Hicks,  Thompson and  Walters \cite{ClarkHicksThompsonWalters} redetermined the    universality of all nine diagonal forms using \textit{geometry of numbers} methods. In addition they provided a brief literature survey about the methods which was used to determine the universality of all nine diagonal forms from the set of the $54$ universal sums.  
 Sun  \cite{Sun6}, obtained a result similar to the Lagrange's theorem on sums of four squares, where he showed that every positive integer can be written as a sum of four generalized octagonal numbers one of which is odd.  Moreover, for $35$ triples $(b, c, d)$ with $1 \leq b \leq c \leq d$, he proved that any nonnegative integer can be expressed as $p_8(w) +bp_8(x) +cp_8(y) +dp_8(z)$ with $w,x,y,z \in \mathbb{Z}$.  He proved the following interesting  theorem:
\begin{theorem} \cite[Theorem 1.3]{Sun6}
$p_8+bp_8+cp_8+dp_8$ is universal over $\mathbb{Z}$ for any $(b, c, d)$ among the $33$ triples
\begin{align*}
&(1, 2, 2),~ (1, 2, 8),~ (2, 2, 4),~ (2, 4, 8),~ (2, 2, 2),~ (2, 4, 4),\\
&(1, 1, 2),~ (1, 2, 3),~ (1, 2, 5),~ (1, 2, 7),~ (1, 2, 9),~ (1, 2, 11),~ (1, 2, 13),\\
&(1, 2, 4),~(2, 3, 4),~ (2, 4, 5),~ (2, 4, 7),~ (2, 4, 9),~ (2, 4, 11),~ (2, 4, 13),\\
&(1, 1, 3),~(2, 2, 3),~ (2, 2, 6),~ (2, 3, 8),~ (1, 2, 6),~ (1, 2, 10),~ (1, 2, 12),\\
&(2, 4, 6),~(2, 4, 10),~ (2, 4, 12),~ (2, 2, 5),~ (2, 3, 5),~ (1, 3, 5).
\end{align*}
\end{theorem}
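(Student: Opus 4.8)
The plan is to convert universality into a positivity statement for the coefficients of a product of Ramanujan theta functions, and then to discharge that positivity by a representation argument for a diagonal quaternary form carrying a congruence condition. First I would record that the one-variable generating function for a generalized octagonal number is a special value of \eqref{2.1}: matching the exponent $p_8(x)=x(3x-2)=3x^2-2x$ against $a^{n(n+1)/2}b^{n(n-1)/2}$ forces $a=q$, $b=q^5$, so
$$\sum_{x\in\mathbb{Z}} q^{p_8(x)} = f(q,q^5).$$
Hence, writing $r_{(b,c,d)}(N)$ for the number of representations of $N$ by $p_8+bp_8+cp_8+dp_8$,
$$\sum_{N=0}^{\infty} r_{(b,c,d)}(N)\,q^N = f(q,q^5)\,f(q^b,q^{5b})\,f(q^c,q^{5c})\,f(q^d,q^{5d}),$$
and $(b,c,d)$ gives a universal sum exactly when every coefficient on the right is strictly positive.

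The decisive reduction is the elementary identity $3p_8(x)+1=(3x-1)^2$. Setting $X_0=3w-1,\,X_1=3x-1,\,X_2=3y-1,\,X_3=3z-1$, the equation $p_8(w)+bp_8(x)+cp_8(y)+dp_8(z)=N$ is equivalent to
$$X_0^2+bX_1^2+cX_2^2+dX_3^2 = 3N+(1+b+c+d),\qquad X_i\equiv 2\pmod 3.$$
So $N$ is represented iff $M:=3N+1+b+c+d$ is represented by the diagonal quaternary form $Q_{(b,c,d)}=X_0^2+bX_1^2+cX_2^2+dX_3^2$ using only entries coprime to $3$. On the theta side the coprimality is exactly the ``$3$-dissection'' built into $f$: with $\varphi(q):=f(q,q)=\sum_{m}q^{m^2}$ one checks $2q\sum_{x}q^{3p_8(x)}=\varphi(q)-\varphi(q^9)$, the square-generating series stripped of bases divisible by $3$, so the two formulations agree coefficient by coefficient.

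With this dictionary I would treat the $33$ triples in families. For each $Q_{(b,c,d)}$ I would first determine the set of integers lying in the residue class $1+b+c+d\pmod 3$ that it represents \emph{at all}, using the local--global theory of quaternary quadratic forms: a positive diagonal quaternary form represents every sufficiently large locally represented integer (Tartakowsky/Kloosterman), and for forms alone in their genus this is unconditional, while the remaining forms can be handled by splitting off a ternary subform whose represented integers are classified in the style of Dickson. I would then upgrade ``represented'' to ``represented with all variables coprime to $3$'': expressing the number of representations of $M$ by $Q$ with a prescribed set of coordinates forced to be multiples of $3$ in terms of $Q$-representations of a suitably rescaled integer, an inclusion--exclusion over the four coordinates isolates the count with every coordinate coprime to $3$; equivalently one shows the $3$-dissected theta product has nonnegative coefficients. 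Finally, the finitely many small $N$ left uncovered by the asymptotic input would be verified directly from the initial $q$-expansion of the theta product.

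The main obstacle is precisely the congruence $X_i\equiv 2\pmod 3$: bare representability of $M$ by $Q_{(b,c,d)}$ does not suffice, and for several of the triples $Q$ genuinely admits representations of $M$ that place a multiple of $3$ in one of the four slots, so the coprimality must be proved rather than assumed. I expect this local analysis at the prime $3$ (and, for the non-unit coefficients, at the primes dividing $bcd$) to be the crux, and the cleanest route will likely be to exhibit, for each triple, an explicit theta-function identity rewriting $f(q,q^5)f(q^b,q^{5b})f(q^c,q^{5c})f(q^d,q^{5d})$ as a series with manifestly nonnegative coefficients. Securing uniform bounds on the exceptional set across all $33$ triples, so that only a controlled finite verification remains, is the part I would budget the most effort for.
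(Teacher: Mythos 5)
This statement is not proved in the paper at all: it is quoted verbatim from Sun \cite[Theorem 1.3]{Sun6} and enters only as an input (the paper later re-derives eight of the thirty-three triples by theta-function dissection in item 18 of the introduction, but those derivations themselves bootstrap from other triples of Sun's theorem, so the citation is essential). The relevant benchmark is therefore Sun's original argument, and your opening reduction is in fact exactly his: from $3p_8(x)+1=(3x-1)^2$ one passes to representations of $M=3N+1+b+c+d$ by $X_0^2+bX_1^2+cX_2^2+dX_3^2$ with every $X_i$ coprime to $3$, and your dissection $2q\sum_x q^{3p_8(x)}=\varphi(q)-\varphi(q^9)$ is precisely identity \eqref{t-s1varphi} of the present paper. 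Up to this point the proposal is correct and on the right track.

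The gap is that everything after this reduction is a plan rather than a proof, and the step you yourself flag as the crux is exactly the one that cannot be waved through. For none of the thirty-three triples do you show that $M$ actually admits a representation with all four coordinates prime to $3$: the inclusion--exclusion over coordinates divisible by $3$ expresses the desired count as an alternating sum, but gives no positive lower bound, since a priori the representations with some $X_i\equiv 0\pmod 3$ could exhaust all representations of $M$; and for an individual form that is not alone in its genus one cannot control $r_Q(M)$ by local densities without explicitly bounding the cusp-form contribution. Sun's actual proof is a genuinely case-by-case argument: for each family of triples he either modifies a given representation by explicit substitutions among the variables to remove the divisibility by $3$, or splits off a ternary subform and invokes the Gauss--Legendre three-square theorem (and Dickson-style classifications of ternary forms) in the relevant residue class. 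Likewise, the appeal to Tartakowsky/Kloosterman followed by ``a finite check of small $N$'' is not yet a proof: the exceptional bound in those theorems is not explicit as stated, so the finite verification you defer to is not controlled without further effective analysis of each of the thirty-three forms. As it stands the proposal establishes the correct dictionary between the theta product and the congruence-constrained quaternary form, but proves none of the thirty-three universality claims.
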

Further, he   conjectured that, the sum $p_8+bp_8+cp_8+dp_8$ is universal over $\mathbb{Z}$ if $(b, c, d)$ is among the five triples
$(1, 3, 3),~ (1, 3, 6),~ (2, 3, 6),~ (2, 3, 7),~ (2, 3, 9)$ which were confirmed by  Ju and   Oh \cite{JuOh}.\\
He  also conjectured that each $n \in \mathbb {N}_0$ can be written in the form of quaternary sum $p_5(w) + bp_5(x) + cp_5(y) + dp_5(z)$ with $w, x, y, z \in \mathbb{N}_0$, and provided a list of all the $15$ triples  $(b, c, d)$. Some of these triples were confirmed by  Meng and Sun \cite{MengSun} and   Krachun and Sun \cite{KrachunSun}.\\
In Section $4$, Ju and  Oh \cite{JuOh2} determined all quaternary universal mixed sums of generalized 4- and 8-gonal numbers including those
 94 quaternary universal sums which are due to Ramanujan \cite{Ramanujan} and Sun \cite{Sun6}. They proved that, there are exactly $547$ proper quaternary universal mixed sums of generalized 4- and 8-gonal numbers and listed them.\\
 Many other Studies have been done for such specific sum  of a generalized polygonal numbers of order $m$ with $m \geq 3$, for instance:\\
 In 2013,  Bosma and  Kane \cite{BosmaKane}  investigated the representability of positive integers as sums of triangular numbers. In particular, they  showed that $f(x_1, x_2, \ldots , x_k) = b_1p_3({x_1}) + \cdots + b_kp_3({x_k})$,  for fixed positive integers $b_1, b_2, \ldots , b_k$, represents every nonnegative integer if and only if it represents $1, 2, 4, 5$ and $8$.\\
 In $2020$,  Ju \cite{Ju} showed that the sums of the form $a_1p_5(x_1) + \cdots + a_kp_5(x_k)$  with $x_1, \ldots , x_k \in \mathbb{ Z}$ and  $a_1, \ldots, a_k \in \mathbb{ N} $, are universal if they represent  the twelve numbers $1, ~3,~ 8,~ 9,~ 11,~ 18,~ 19, ~25, ~27,~ 43, ~98, ~109.$
 Further, he proved that there are exactly $90$ quaternary proper universal sums of generalized pentagonal numbers.\\
In 2023,  Kamaraj,  Kane, and  Tomiyasu \cite{KamarajKaneTomiyasu},  considered a representation of integers as sums of the forms $a_1 p_7(x_1) +a_2 p_7(x_2) + \cdots +  a_k p_7(x_k) $, with $ a_1, a_2,\ldots, a_k \in \mathbb{Z}^+$ and $ x_1, x_2,\ldots, x_k \in \mathbb{Z}$ . In particular, they investigated the classification of such sums which are universal. They proved an explicit finite bound such that a given sum is universal if and only if it represents positive integer up to the given bound.\\
In $2024$,  Yang \cite{Yang} studied universal mixed sums of triangular numbers and squares. In particular, he proved that a sum of triangular numbers and squares is universal if and only if it represents
1, 2, 3, 4, 5, 6, 7, 8, 10, 13, 14, 15, 18, 19, 20, 23, 27, 28, 34, 41, 47, and 48.\\
 Recently, Sun \cite{Sun7} established several new results that are similar to Lagrange's four-square theorem.
For instance, Sun  showed  that any integer $n>1$ can be written as
$w(5w+1)/2+x(5x+1)/2+y(5y+1)/2+z(5z+1)/2$ with $w,x,y,z\in\mathbb Z$. In addition,
for any integer $a$ and $b$ with $a>0$, $b>-a$ and $\gcd(a,b)=1$. When $2\nmid ab$, Sun proved  that
any sufficiently large integer can be written as
$$\frac{w(aw+b)}2+\frac{x(ax+b)}2+\frac{y(ay+b)}2+\frac{z(az+b)}2$$
with $w,x,y,z$ nonnegative integers. When $2\mid a$ and $2\nmid b$,  he showed that any sufficiently large integer can be written as
$$w(aw+b)+x(ax+b)+y(ay+b)+z(az+b)$$
with $w,x,y,z$ nonnegative integers. At the end of \cite{Sun7}, Sun posed  many interesting open conjectures.

 Recently, the first author and Sun \cite{SunBulkhali} introduced a new technique to determine the universality of the sums in the form $l(al+b)/2+m(cm +d)/2+n(en +f)/2$, where $l,m,n \in \mathbb{Z}$, that are conjectured by  Sun \cite{Sun2},  using the theory of Ramanujan's theta function identities. Furthermore, Bulkhali and Sun obtained many sums of the same form that are either  universal or almost universal over $\mathbb{Z}$ (the sum is called almost universal if it represents all but finitely many positive integers). Bulkhali and Sun~\cite{SunBulkhali} derived the following novel theorem:
\begin{theorem}\label{thm1.1}
 Let $i, j, s, t, u, v \in \mathbb{Z}$ with $i + j, s + t, u + v$ positive. Suppose that
 \begin{align}\label{Athm15.11}
  f(q^i,q^j)f(q^s,q^t)f(q^u,q^v)=\sum_{r=1}^k m_{r} q^{r-1} f(q^{k a_{1 r}},q^{ka_{2r}})f(q^{k b_{1r}},q^{kb_{2r}})f(q^{k c_{1r}},q^{kc_{2r}}),
\end{align}
where  $a_{1r},a_{2r},b_{1r},b_{2r},c_{1r},c_{2r} \in \mathbb{Z}$ and $k,m_r, a_{1r}+a_{2r},b_{1r}+b_{2r},c_{1r}+c_{2r} \in \mathbb{Z}^+$.
\begin{description}
  \item[(i)] The tuple $(i + j, i - j, s + t, s - t, u + v, u - v)$ is universal if and only if
  $$(a_{1r}+a_{2r},a_{1r}-a_{2r},b_{1r}+b_{2r},b_{1r}-b_{2r},c_{1r}+c_{2r},c_{1r}-c_{2r})$$
  is universal for all $r=1,\ldots, k$.
  \item[(ii)] The tuple $(i + j, i - j, s + t, s - t, u + v, u - v)$ is almost universal if and only if the tuple
   $$(a_{1r}+a_{2r},a_{1r}-a_{2r},b_{1r}+b_{2r},b_{1r}-b_{2r},c_{1r}+c_{2r},c_{1r}-c_{2r})$$
   is almost universal for all $r=1,\ldots, k$.
\end{description}
 \end{theorem}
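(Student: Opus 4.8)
The plan is to translate both sides of the hypothesis \eqref{Athm15.11} into generating functions for representation numbers and then compare coefficients, sieved by residue classes modulo $k$.

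First I would record the dictionary between Ramanujan's theta function and the form attached to a tuple. Replacing $a$ by $q^{\alpha}$ and $b$ by $q^{\beta}$ in \eqref{2.1} and simplifying the exponent gives
\begin{equation*}
f(q^{\alpha},q^{\beta})=\sum_{n=-\infty}^{\infty}q^{\,n((\alpha+\beta)n+(\alpha-\beta))/2},
\end{equation*}
and since $(\alpha+\beta)+(\alpha-\beta)=2\alpha$ is even, every exponent is an integer. Consequently, for a tuple $(A_1,B_1,A_2,B_2,A_3,B_3)$ the product of the three associated theta functions is the series $\sum_{N} R(N)q^{N}$, where $R(N)$ counts the $(l,m,n)\in\mathbb{Z}^3$ with $\frac{l(A_1l+B_1)}2+\frac{m(A_2m+B_2)}2+\frac{n(A_3n+B_3)}2=N$. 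In particular the left-hand side of \eqref{Athm15.11} is exactly the generating series of $R(N)$ for the tuple $(i+j,i-j,s+t,s-t,u+v,u-v)$, so that this tuple is universal (resp. almost universal) if and only if $R(N)>0$ for all $N\in\mathbb{N}_0$ (resp. for all but finitely many $N\in\mathbb{N}_0$).

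Next I would analyse the right-hand side. For each $r$ the factor $f(q^{ka_{1r}},q^{ka_{2r}})$ is, by the same computation, a power series in $q^{k}$; hence the $r$-th triple product equals $\sum_{N'\ge 0}R_r(N')q^{kN'}$, where $R_r(N')$ counts the representations of $N'$ by the tuple $(a_{1r}+a_{2r},a_{1r}-a_{2r},b_{1r}+b_{2r},b_{1r}-b_{2r},c_{1r}+c_{2r},c_{1r}-c_{2r})$. Multiplying by $m_rq^{r-1}$ places this contribution entirely in the residue class $r-1\pmod k$. The key point is that as $r$ runs over $1,\dots,k$ the shifts $r-1$ run over the \emph{distinct} residues $0,1,\dots,k-1$, so each nonnegative integer $N$ is hit by exactly one term: writing $N=kN'+(r-1)$ with $0\le r-1\le k-1$ and $N'\ge 0$, comparison of the coefficients of $q^{N}$ on the two sides of \eqref{Athm15.11} yields
\begin{equation*}
R(N)=m_r\,R_r(N').
\end{equation*}

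Finally I would read off the two equivalences. Because each $m_r\in\mathbb{Z}^{+}$ and each $R_r(N')$ is a nonnegative integer, $R(N)>0$ if and only if $R_r(N')>0$, and the assignment $N\mapsto (r,N')$ is a bijection between $\mathbb{N}_0$ and $\{1,\dots,k\}\times\mathbb{N}_0$. Thus $R(N)>0$ for every $N\in\mathbb{N}_0$ if and only if $R_r(N')>0$ for every $r\in\{1,\dots,k\}$ and every $N'\in\mathbb{N}_0$, which is statement (i). For statement (ii), the same bijection identifies $\{\,N\in\mathbb{N}_0:R(N)=0\,\}$ with the disjoint union $\bigsqcup_{r=1}^{k}\{\,N'\in\mathbb{N}_0:R_r(N')=0\,\}$; a finite union is finite exactly when each of its $k$ pieces is finite, which gives almost universality of the left-hand tuple if and only if every right-hand tuple is almost universal. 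The bulk of the argument is bookkeeping, and the one step that must be handled with care is the residue sieving: checking that each right-hand triple product is genuinely a series in $q^{k}$ and that the prefactors $q^{r-1}$ distribute the $k$ terms into pairwise distinct residue classes modulo $k$, so that comparing coefficients isolates a single $R_r$ for each $N$. Once this is in place, the hypotheses $m_r>0$ and $R_r(N')\ge 0$ make the transfer of positivity (and of finiteness of the exceptional set) immediate; all rearrangements are legitimate by absolute convergence for $|q|<1$, or may be read purely formally.
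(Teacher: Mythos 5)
Your argument is correct: the dictionary $f(q^{\alpha},q^{\beta})=\sum_{n}q^{n((\alpha+\beta)n+(\alpha-\beta))/2}$, the observation that each right-hand triple product in \eqref{Athm15.11} is a series in $q^{k}$ shifted into the residue class $r-1 \pmod k$, and the resulting coefficient identity $R(N)=m_rR_r(N')$ under the bijection $N\leftrightarrow(r,N')$ give both (i) and (ii) exactly as you say. Note that the paper itself offers no proof to compare against -- it imports this theorem verbatim from the Bulkhali--Sun preprint \cite{SunBulkhali} -- but your coefficient-extraction argument sieved by residues modulo $k$ is the natural proof and is precisely the mechanism the paper relies on when it applies Corollary \ref{coro1.1} throughout Section \ref{MainResults}.
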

 Recently, Chaudhary \cite{Chaudary1} established a family of theta-function identities based upon $R_{\alpha}, R_{\beta}$ and $R_{m}$-functions related to Jacobi’s triple-product identity, see also (\cite{Chaudary2, Chaudary3, Gladun}).\\
 
 In this paper, we use a similar technique by applying  Theorem \ref{thm1.1} for the products of four theta functions and determine the universality of many quaternary mixed sums that are  involving squares, triangular numbers, generalized pentagonal numbers and generalized octagonal  numbers. The part (i) of the above theorem can be written (for the product of four theta functions) as in the following corollary:
 \begin{corollary}\label{coro1.1}
 Let $g, h, i, j, s, t, u, v \in \mathbb{Z}$ with $g+h, i + j, s + t, u + v \in \mathbb{Z}^{+}$. Suppose that
 \begin{align}\label{Q1.1}
 &f(q^g,q^h) f(q^i,q^j)f(q^s,q^t)f(q^u,q^v)\nonumber \\&=\sum_{r=1}^k m_{r} q^{r-1} f(q^{k a_{1 r}},q^{ka_{2r}})f(q^{k b_{1r}},q^{kb_{2r}})f(q^{k c_{1r}},q^{kc_{2r}})f(q^{k d_{1r}},q^{kd_{2r}}),
\end{align}
where  $a_{1r},a_{2r},b_{1r},b_{2r},c_{1r},c_{2r},d_{1r},d_{2r} \in \mathbb{Z}$ and $k,m_r, a_{1r}+a_{2r},b_{1r}+b_{2r},c_{1r}+c_{2r},d_{1r}+d_{2r} \in \mathbb{Z}^+$. The sum
\begin{equation*}
  \frac{x((g+h)x+g-h)}{2}+ \frac{y((i+j)y+i-j)}{2}+\frac{z((s+t)z+s-t)}{2}+\frac{w((u+v)w+u-v)}{2}
\end{equation*}
is universal over $\mathbb{Z}$,  if and only if the sum
\begin{align*}
  \frac{x((a_{1r}+a_{2r})x+a_{1r}-a_{2r})}{2}+ \frac{y((b_{1r}+b_{2r})y+b_{1r}-b_{2r})}{2}\\+\frac{z((c_{1r}+c_{2r})z+c_{1r}-c_{2r})}{2}+\frac{w((d_{1r}+d_{2r} )w+d_{1r}-d_{2r} )}{2}
\end{align*}
is universal over $\mathbb{Z}$.
   \end{corollary}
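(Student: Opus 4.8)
The plan is to recognize Corollary~\ref{coro1.1} as the four-factor analogue of part~(i) of Theorem~\ref{thm1.1} and to show that the argument behind the latter transfers verbatim, since it never uses that the number of theta factors is exactly three. The starting point is the elementary identity obtained from the definition~\eqref{2.1},
$$f(q^a,q^b)=\sum_{n=-\infty}^{\infty}q^{\frac{n((a+b)n+(a-b))}{2}},$$
which identifies the exponents of $f(q^a,q^b)$ with the values of the generalized polygonal expression $\frac{n((a+b)n+(a-b))}{2}$ as $n$ ranges over $\mathbb Z$. Consequently, the coefficient of $q^N$ in the left-hand product of~\eqref{Q1.1} equals the number of representations of $N$ by the quaternary sum
$$\frac{x((g+h)x+g-h)}{2}+\frac{y((i+j)y+i-j)}{2}+\frac{z((s+t)z+s-t)}{2}+\frac{w((u+v)w+u-v)}{2},$$
and this sum is universal over $\mathbb Z$ precisely when that coefficient is positive for every $N\in\mathbb N_0$, i.e.\ when the support of the left-hand power series is all of $\mathbb N_0$.

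First I would dissect the right-hand side of~\eqref{Q1.1} residue class by residue class modulo $k$. Each inner theta function $f(q^{ka_{1r}},q^{ka_{2r}})$ has exponents $k\cdot\frac{n((a_{1r}+a_{2r})n+(a_{1r}-a_{2r}))}{2}$ divisible by $k$, so the product of the four inner theta functions in the $r$-th summand contributes only to exponents that are multiples of $k$; the prefactor $q^{r-1}$ then shifts this entire summand into the single residue class $r-1\pmod k$. As $r$ runs over $1,\dots,k$, the $k$ summands occupy pairwise distinct residue classes, so no cancellation between different values of $r$ is possible. This residue-separation step is the heart of the matter and the only place where the hypotheses $k\in\mathbb Z^+$ and the divisibility of every inner exponent by $k$ are used.

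Next I would read off coefficients one class at a time. For $N\equiv r-1\pmod k$, the coefficient of $q^N$ on the right equals $m_r$ times the coefficient of $q^{(N-(r-1))/k}$ in the product $f(q^{a_{1r}},q^{a_{2r}})f(q^{b_{1r}},q^{b_{2r}})f(q^{c_{1r}},q^{c_{2r}})f(q^{d_{1r}},q^{d_{2r}})$ obtained after dividing every exponent by $k$. By the same counting interpretation as above, this latter coefficient is the number of representations of $(N-(r-1))/k$ by the $r$-th quaternary sum
$$\frac{x((a_{1r}+a_{2r})x+a_{1r}-a_{2r})}{2}+\cdots+\frac{w((d_{1r}+d_{2r})w+d_{1r}-d_{2r})}{2}.$$
Since $m_r\in\mathbb Z^+$ and all these contributions are nonnegative, the coefficient of $q^N$ is positive exactly when $(N-(r-1))/k$ is represented by the $r$-th sum; and as $N$ ranges over the class $r-1\pmod k$ in $\mathbb N_0$, the quantity $(N-(r-1))/k$ ranges over all of $\mathbb N_0$.

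Finally, I would match the two sides coefficient by coefficient in~\eqref{Q1.1}. This shows that the left-hand coefficient is positive for every $N\in\mathbb N_0$ if and only if, for each $r=1,\dots,k$, every nonnegative integer is represented by the $r$-th sum, i.e.\ each of the $k$ sums on the right is universal over $\mathbb Z$---exactly the claimed equivalence. I anticipate no genuine obstacle beyond bookkeeping: the substantive content lies entirely in the residue-separation step, which coincides with the one in the proof of Theorem~\ref{thm1.1}(i) and is wholly insensitive to the number of theta factors, so the passage from three to four factors is purely mechanical.
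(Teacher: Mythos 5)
Your proof is correct and takes essentially the same approach as the paper: the paper offers no separate argument for Corollary~\ref{coro1.1}, presenting it as the four-factor analogue of Theorem~\ref{thm1.1}(i) quoted from \cite{SunBulkhali}, and your coefficient-extraction plus residue-class-separation argument (the $r$-th summand living entirely in the class $r-1 \pmod k$ with multiplicity $m_r>0$, so no cancellation across classes) is precisely the mechanism behind that theorem, transferred unchanged from three factors to four. You have simply written out the details the paper leaves implicit.
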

  In the above corollary, without loss of generality,  we assume that 
  $a_{1r}-a_{2r},b_{1r}-b_{2r},c_{1r}-c_{2r},d_{1r}-d_{2r} \in \mathbb{Z}^-\cup \{0\}$,(are not positive integers). We make use of this corollary to prove our results below. We can classify those quaternary mixed sums we determined their universality as follows:
 \begin{enumerate}
   \item The sum of the form $ap_5(h)+bp_8(l)+cp_8(m)+dp_8(n)$ is universal over $\mathbb{Z}$ if $(a,b,c,d)$ is among the quadruples
   \begin{align*}
     (4,1,1,1),~(2,1,1,2),~(4,1,1,2),~(2,1,2,2),~(1,1,1,2),~(4,1,1,3),\\
     (2,1,2,3),~(4,1,2,6),~(4,1,2,3),~(4,1,2,5),~(4,1,2,2),~(2,1,1,1).
   \end{align*}
   \item The sum of the form $ap_5(h)+bp_5(l)+cp_8(m)+dp_8(n)$ is universal over $\mathbb{Z}$ if $(a,b,c,d)$ is among the quadruples
   \begin{align*}
    (2,4,1,1),~(2,4,1,2),~(1,2,1,2),~(1,4,1,1),~(1,4,1,2),~(2,4,1,3),~(2,2,1,1).
   \end{align*}

   \item The sum of the form $ap_5(h)+bp_5(l)+cp_5(m)+dp_8(n)$ is universal over $\mathbb{Z}$ if $(a,b,c,d)$ is among the quadruples
   \begin{align*}
    (1,1,1,2),~(1,2,4,1),~(1,2,4,2),~(1,3,6,1),\\
    (1,2,3,3),~(1,1,4,2),~(1,2,2,2),~(2,2,2,1).
   \end{align*}

   \item The sum of the form $ap_3(h)+bp_5(l)+cp_8(m)+dp_8(n)$ is universal over $\mathbb{Z}$ if $(a,b,c,d)$ is among the quadruples
   \begin{align*}
    (6,2,1,1),~(6,2,1,2),~(6,2,1,3),~(6,2,1,4),~(6,2,1,5),\\
    (6,2,1,6),~(3,1,1,1),~(3,1,1,2),~(3,1,2,2),~(3,1,2,3).
   \end{align*}
    \item The sum of the form $ap_3(h)+bp_5(l)+cp_5(m)+dp_8(n)$ is universal over $\mathbb{Z}$ if $(a,b,c,d)$ is among the quadruples
    \begin{align*}
     &(3,1,4,1),~(3,1,4,2),~(3,1,4,3),~(3,1,1,2),~(3,1,2,1),~(6,1,2,1),\\
     &(6,2,2,1),~(1,1,6,1),~(1,2,6,2),~(1,4,6,1),~(2,4,12,1).
    \end{align*}
    \item The sum of the form $ap_3(h)+bp_4(l)+cp_5(m)+dp_8(n)$ is universal over $\mathbb{Z}$ if $(a,b,c,d)$ is among the quadruples
    \begin{align*}
    &(1,1,2,2),~(1,1,3,3),~(1,1,4,1),~(1,1,4,2),~(2,2,1,1),~(2,2,1,2),\\
    &(3,3,1,1),~(4,1,1,1),~(4,1,1,2),~(4,1,2,1),~(4,1,2,2),~(6,3,2,1).
     \end{align*}
    \item The sum of the form $ap_3(h)+bp_5(l)+cp_5(m)+dp_5(n)$ is universal over $\mathbb{Z}$ if $(a,b,c,d)$ is among the quadruples
    \begin{align*}
      &(1,1,1,6),~(1,2,4,6),~(2,1,4,12),~(3,1,1,4),~(1,2,6,18),\\
      &(3,1,2,2),~(4,1,2,6),~(6,1,1,2),~~(6,1,2,2).
    \end{align*}
    \item The sum of the form $ap_4(h)+bp_8(l)+cp_8(m)+dp_8(n)$ is universal over $\mathbb{Z}$ if $(a,b,c,d)$ is among the quadruples
    \begin{align*}
      &(1,1,1,2),~(3,1,1,1),~(3,1,1,2),~(3,1,1,3),~(3,1,1,4),~(3,1,1,5),\\
      &(3,1,1,6),~(3,1,2,2),~(3,1,2,3),~(6,1,1,2),~(6,1,1,3).
    \end{align*}
    \item The sum of the form $ap_3(h)+bp_3(l)+cp_4(m)+dp_5(n)$ is universal over $\mathbb{Z}$ if $(a,b,c,d)$ is among the quadruples
    \begin{align*}
      (1,1,2,3),~(1,2,2,6),~(1,2,3,1),~(1,2,3,2),~(1,3,1,2),\\
      (1,4,1,6),~(1,6,1,1),~(1,6,1,2),~(2,2,1,6),~(3,4,1,1).
    \end{align*}
    \item The sum of the form $ap_3(h)+bp_4(l)+cp_8(m)+dp_8(n)$ is universal over $\mathbb{Z}$ if $(a,b,c,d)$ is among the quadruples
    \begin{align*}
     &(1,1,1,2),~(2,2,1,1),~(2,2,1,2),~(2,2,1,3),~(4,1,1,1),~(4,1,1,2),\\
     &(4,1,1,3),~(4,1,2,2),~(4,1,2,3),~(4,1,2,5),~(4,1,2,6).
    \end{align*}
    \item The sum of the form $ap_4(h)+bp_5(l)+cp_8(m)+dp_8(n)$ is universal over $\mathbb{Z}$ if $(a,b,c,d)$ is among the quadruples
    \begin{align*}
    (3,1,1,1),~(3,2,1,1),~(3,4,1,2).
     \end{align*}
     \item The sum of the form $ap_3(h)+bp_3(l)+cp_8(m)+dp_8(n)$ is universal over $\mathbb{Z}$ if $(a,b,c,d)$ is among the quadruples
\begin{align*}
 (1,2,1,2),~(2,4,1,1),~(2,4,1,2),~(2,4,1,3).
\end{align*}
    \item The sum of the form $ap_3(h)+bp_3(l)+cp_5(m)+dp_5(n)$ is universal over $\mathbb{Z}$ if $(a,b,c,d)$ is among the quadruples
\begin{align*}
  (1,2,2,4),~(1,3,2,6),~(1,4,1,2),~(1,6,2,6),\\
  (2,4,1,2),~(2,4,1,4),~(3,3,1,1),~(3,6,1,2).
\end{align*}
 \item The sum of the form $p_3(h)+bp_3(l)+cp_3(m)+dp_5(n)$ is universal over $\mathbb{Z}$ if $(b,c,d)$ is among the triples
\begin{align*}
(1,2,6),~(1,4,3),~(2,3,2),~(2,3,4),~(2,6,2).
\end{align*}
 \item The sum of the form $ap_3(h)+bp_3(l)+cp_5(m)+dp_8(n)$ is universal over $\mathbb{Z}$ if $(a,b,c,d)$ is among the quadruples
\begin{align*}
(1,2,1,1),~(1,2,2,2),~(1,2,4,1),~(1,2,4,2),~(2,4,1,1),~(2,4,4,1).
\end{align*}
 \item The sum of the form $ap_4(h)+bp_5(l)+cp_5(m)+dp_8(n)$ is universal over $\mathbb{Z}$ if $(a,b,c,d)$ is among the quadruples
\begin{align*}
 (1,2,3,2),~(2,4,6,1),~(3,1,1,1),~(3,1,2,1).
\end{align*}
  \item The sum of the form $ap_3(h)+bp_4(l)+cp_5(m)+dp_5(n)$ is universal over $\mathbb{Z}$ if $(a,b,c,d)$ is among the quadruples
\begin{align*}
 (1,3,2,6),~(1,1,1,2),~(3,3,1,1),~(4,1,1,2),~(6,1,1,3),~(6,3,1,1).
\end{align*}
  \item The sum of the form $p_8(h)+bp_8(l)+cp_8(m)+dp_8(n)$ is universal over $\mathbb{Z}$ if $(b,c,d)$ is among the triples
\begin{align*}
(1,1,1),~ (1,1,2),~(1,2,2),~(1,2,3),~(2,2,2),~(2,2,3),~(2,2,5),~(2,2,6).
\end{align*}
\begin{remark}
	The universality of these sums have been determined by Sun \cite{Sun6} using the theory  of quaternary quadratic forms. Here we redetermine using  the theory of Ramanujan's theta function identities.
\end{remark}
 \item The sum of the form $ap_4(h)+bp_4(l)+cp_8(m)+dp_8(n)$ is universal over $\mathbb{Z}$ if $(a,b,c,d)$ is among the quadruples
\begin{align*}
(1,3,1,2),~(1,6,1,1),~(3,3,1,1).
\end{align*}
\begin{remark}
The universality of the above three sums have been determined by Ju and Oh \cite{JuOh2}. Here we redetermine using our approaches.
\end{remark}
 \item The sum of the form $p_3(h)+bp_3(l)+cp_4(m)+p_8(n)$ is universal over $\mathbb{Z}$ if $(b,c)$ is among the ordered pairs
\begin{align*}
(2,3),~(3,1),~(6,1).
\end{align*}
 \item The sum of the form $p_5(h)+p_5(l)+cp_5(m)+dp_8(n)$ is universal over $\mathbb{Z}$ if $(c,d)$ is among the ordered pairs
\begin{align*}
(1,4),~(3,6).
\end{align*}
\begin{remark}
{Ju} \cite{Ju} showed that the above two sums are universal.
\end{remark}
\item The sum of the form $ap_3(h)+bp_3(l)+cp_3(m)+dp_8(n)$ is universal over $\mathbb{Z}$ if $(a,b,c,d)$ is among the quadruples
\begin{align*}
(1,2,3,1),~(1,2,3,2).
\end{align*}
\item The sum of the form $ap_3(h)+bp_4(l)+cp_4(m)+dp_8(n)$ is universal over $\mathbb{Z}$ if $(a,b,c,d)$ is among the quadruples
\begin{align*}
(1,1,3,1),~(4,1,3,2).
\end{align*}
\item In addition,  the following sums are universal
$$p_4+2p_5+3p_5+4p_5, ~~ p_3+2p_5+6p_5+18p_5~~ \text{and}~~ 4p_3+p_4+2p_4+6p_5.$$
 \end{enumerate}

The paper is organised as follows: In Section \ref{Preliminary}, we collect the necessary definitions and lammas and we derive some results needed in the rest of the paper. In Section \ref{MainResults}, we present and prove many beautiful theorems.
\section{Preliminary results}\label{Preliminary}
The function $f(a,b)$ satisfies the following
basic properties \cite{Adiga3}:
\begin{align}\label{2.3}f(a,b)& = f(b,a)\\
\label{2.4}f(1,a)& = 2f(a,a^{3}).
\end{align}
Ramanujan has defined the following  two  special cases of
(\ref{2.1}) \cite[Entry 22]{Adiga3} for $|q|<1$:
\begin{align}\label{2.7}\varphi(q)&:= f(q,q) =  \sum_{n=-\infty}^{\infty} q^{n^2},\\
\label{2.8}\psi(q)&:= f(q,q^3) =  \sum_{n=-\infty}^{\infty} {q^{n(2n-1)}}=  \sum_{n=0}^{\infty} {q^{n(n+1)/2}}.
\end{align}
Also, following Ramanujan,  we define
\begin{align}\label{X(q)}
 X(q):&= f(q,q^2) =  \sum_{x=-\infty}^{\infty} {q^{x(3x-1)/2}},\\
Y(q):&= f(q,q^5) =  \sum_{x=-\infty}^{\infty} {q^{x(3x-2)}}. \label{Y(q)}
\end{align}
The above defined four functions can be considered as the generating functions of the squares $p_4(x)$, the triangular numbers $p_3(x)$, the generalized  pentagonal numbers $p_5(x)$ and the generalized octagonal  numbers $p_8(x)$, respectively. In general, the  theta function $f(q, q^{m-3})$ is the generating function of the generalized $m$-gonal number  $p_m(x)$ with $m \geq 3$. \\
Since theta function is symmetric (by \eqref{2.3}),  it follows that $f(q, q^{m-3})=f(q^{m-3},q)$ and so
\begin{equation}\label{sym}
  \{p_m(x):m,x \in \mathbb{Z}, m \geq 3 \}=  \{p_m(-x):m, x \in \mathbb{Z}, m \geq 3 \},
\end{equation}
which means that $p_m(x) \sim p_m(-x)$.\\
Using \eqref{2.8} and \eqref{2.4}, we observe that
\begin{equation}\label{2.8.1}
  \sum_{n=-\infty}^{\infty} {q^{n(n+1)/2}}=2\, \sum_{n=0}^{\infty} {q^{n(n+1)/2}} =2\, \sum_{n=-\infty}^{\infty} {q^{n(2n-1)}}.
\end{equation}
So that, using  \eqref{2.8.1}, we deduce
\begin{equation}\label{2.8.2}
  \{ p_3(x):x\in \mathbb{Z} \} =  \{x(2x-1):x\in \mathbb{Z}   \} =\{ p_6(x):x\in \mathbb{Z} \}.
\end{equation}
which means that $ p_3(x) \sim  p_6(x)$.\\
The following formula is due to Ramanujan~\cite[Entry 31]{Adiga3}:\\
Let $U_n=a^{n(n+1)/2}b^{n(n-1)/2}$ and $V_n=a^{n(n-1)/2}b^{n(n+1)/2}$ for each integer $n$. Then
\begin{equation}\label{RamIden}
  f(U_1,V_1)=\sum_{r=0}^{n-1}U_r f \left( \frac{U_{n+r}}{U_r},\frac{V_{n-r}}{U_r}  \right).
\end{equation}
The following identity is an easy consequence of \eqref{RamIden} when $n=2$:
\begin{equation}\label{P2.10}
    f(a,b)=f\left(a^3b,ab^3\right) + a f\left(b/a,a^5b^3\right).
\end{equation}
Using  \eqref{P2.10}, it is easy to verify the following lemma:
\begin{lemma} We have
\begin{align}\label{varphi}
  \varphi(q)=&\varphi\left(q^4\right)+2q\psi\left(q^8\right),\\
  \label{AAthm16.3}  \psi(q)=&f\left(q^6,q^{10}\right)+qf\left(q^2,q^{14}\right),\\
  \varphi(q)=&\varphi \left(q^9 \right)+2\,q\, Y \left(q^3\right), \label{t-s1varphi}\\
\psi(q)=&X \left(q^3 \right)+ q \, \psi \left(q^9 \right), \label{t-s2psi}\\
  \label{Y}  Y(q)=&X\left(q^8\right)+qY\left(q^4\right).
\end{align}
\end{lemma}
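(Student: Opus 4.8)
The plan is to read each of the five identities off as a dissection of one of the theta functions $\varphi,\psi,X,Y$, choosing the arguments so that the left-hand side is an instance of Ramanujan's splitting formula \eqref{RamIden}. Three of the identities, \eqref{varphi}, \eqref{AAthm16.3} and \eqref{Y}, are $2$-dissections, so a single application of the $n=2$ case \eqref{P2.10} will suffice. The other two, \eqref{t-s1varphi} and \eqref{t-s2psi}, are genuine $3$-dissections; since one cannot reach a three-term split by iterating the two-term formula \eqref{P2.10}, for these I would invoke \eqref{RamIden} with $n=3$, or equivalently split the defining series according to the residue of the summation index modulo $3$.

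For the $2$-dissections I would substitute directly into \eqref{P2.10}. Taking $a=b=q$ gives $f(a^3b,ab^3)=f(q^4,q^4)=\varphi(q^4)$ and $a\,f(b/a,a^5b^3)=q\,f(1,q^8)$, and then \eqref{2.4} turns $f(1,q^8)$ into $2\psi(q^8)$, yielding \eqref{varphi}. Taking $a=q$, $b=q^3$ gives $a^3b=q^6$, $ab^3=q^{10}$, $b/a=q^2$, $a^5b^3=q^{14}$, which is exactly \eqref{AAthm16.3}. Taking $a=q$, $b=q^5$ gives $a^3b=q^8$, $ab^3=q^{16}$, so the first term is $f(q^8,q^{16})=X(q^8)$ by \eqref{X(q)}, while $b/a=q^4$, $a^5b^3=q^{20}$ make the second term $q\,f(q^4,q^{20})=qY(q^4)$ by \eqref{Y(q)}, which is \eqref{Y}. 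Each step uses only \eqref{P2.10}, the symmetry \eqref{2.3}, and \eqref{2.4}.

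For the $3$-dissections I would argue from the series. For \eqref{t-s1varphi} I split $\varphi(q)=\sum_{n\in\mathbb{Z}}q^{n^2}$ by $n\equiv 0,\pm1 \pmod 3$: the class $n=3m$ contributes $\sum_m q^{9m^2}=\varphi(q^9)$, while $n=3m\pm1$ gives $q\sum_m q^{3m(3m\pm2)}$, and reindexing $m\mapsto -m$ shows each of these two classes equals $qY(q^3)$ by \eqref{Y(q)}, for a total of $2qY(q^3)$. For \eqref{t-s2psi} I use $\psi(q)=\sum_{n\in\mathbb{Z}}q^{n(2n-1)}$ and note that $2n^2-n\equiv 0\pmod 3$ for $n\equiv 0,2$ and $\equiv 1$ for $n\equiv 1$; the residue-$1$ part collapses to $q\psi(q^9)$ by \eqref{2.8}, and the residue-$0$ part (coming from $n\equiv 0,2$) to $X(q^3)$ by \eqref{X(q)}.

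The main obstacle is the bookkeeping in the $3$-dissections. If one prefers to stay inside the \eqref{RamIden} framework with $n=3$ rather than splitting series, the $r=0$ and $r=2$ summands emerge as two separate theta functions, one of them of the form $f(a,b)$ with a negative second argument; showing that they fold back into the single named function $X(q^3)$ (respectively $Y(q^3)$) requires the symmetry \eqref{2.3} together with a shift of the summation index, and keeping the normalizing power of $q$ and the factor $2$ in \eqref{t-s1varphi} straight is where care is needed. Once the exponents are matched against the pentagonal and octagonal generating series \eqref{X(q)}--\eqref{Y(q)}, all five identities follow.
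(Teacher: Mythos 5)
Your proposal is correct, and it follows the route the paper intends: the paper offers no written proof beyond the single remark ``Using \eqref{P2.10}, it is easy to verify the following lemma,'' and your explicit substitutions $a=b=q$, $(a,b)=(q,q^3)$ and $(a,b)=(q,q^5)$ into \eqref{P2.10} are exactly how \eqref{varphi}, \eqref{AAthm16.3} and \eqref{Y} are meant to be verified (with \eqref{2.4} supplying the factor $2$ in the first). Your one point of genuine divergence is also a point in your favour: you correctly observe that \eqref{t-s1varphi} and \eqref{t-s2psi} are $3$-dissections and therefore cannot be reached by the two-term identity \eqref{P2.10} alone, whereas the paper attributes all five identities to \eqref{P2.10}. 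Your two alternatives both work: splitting the defining series by the residue of the summation index modulo $3$ (your bookkeeping checks out, e.g.\ $n=3m\pm1$ in $\varphi$ each reindex to $qY(q^3)$, and the classes $n\equiv0,2\pmod 3$ in $\psi$ assemble into $X(q^3)$ via even and odd $x$), or invoking \eqref{RamIden} with $n=3$, where the $r=2$ summand $q^4f(q^{21},q^{-3})$ must be normalized by an index shift to $qY(q^3)$ exactly as you anticipate. So your proof is complete and, if anything, more precise about which case of \eqref{RamIden} each identity requires.
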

\noindent Using  \cite[Entry 29)]{Adiga3}, we find that
\begin{lemma} If $ab=cd$, then
\begin{equation}\label{RamIden2}
  f(a,b)f(c,d)=f\left(ac,bd\right)f\left(ad,bc\right)+a\,f\left(\frac{b}{c},ac^2d\right)f\left(\frac{b}{d},acd^2\right).
\end{equation}
\end{lemma}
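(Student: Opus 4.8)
The plan is to prove the identity directly from the series definition \eqref{2.1} by a two-variable ``dissection'' of the product, which makes self-contained what the paper cites as \cite[Entry 29]{Adiga3}. First I would write the product as a double series
\begin{equation*}
  f(a,b)f(c,d)=\sum_{m=-\infty}^{\infty}\sum_{n=-\infty}^{\infty} a^{m(m+1)/2}b^{m(m-1)/2}c^{n(n+1)/2}d^{n(n-1)/2},
\end{equation*}
which converges absolutely in the region $|ab|<1$, $|cd|<1$ and may therefore be rearranged freely. The key structural observation is that the hypothesis $ab=cd$ forces the ``unit'' $ab/cd$ to equal $1$, so any term may be multiplied by $(ab/cd)^{t}$ for $t\in\mathbb{Z}$ without changing its value; equivalently, an exponent vector $(w_a,w_b,w_c,w_d)$ attached to $a^{w_a}b^{w_b}c^{w_c}d^{w_d}$ is determined only modulo the shift $(1,1,-1,-1)$. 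This is precisely the freedom that lets the double sum refactor into a product of two single theta series.

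Next I would split the double sum according to the parity of $m+n$. On the even part I substitute $m=i+j$, $n=i-j$, a bijection from $\mathbb{Z}^2$ onto $\{(m,n):m+n\text{ even}\}$; a short computation of the four exponents, using $ab=cd$ to absorb the cross term $(ab/cd)^{ij}=1$, recasts the general summand as $(ac)^{i(i+1)/2}(bd)^{i(i-1)/2}(ad)^{j(j+1)/2}(bc)^{j(j-1)/2}$, so the even part sums to $f(ac,bd)f(ad,bc)$. On the odd part I substitute $m=1-i-j$, $n=j-i$, again a bijection from $\mathbb{Z}^2$ onto $\{(m,n):m+n\text{ odd}\}$; the analogous bookkeeping, now leaving one stray factor of $a$ coming from the offset $m=1-i-j$, turns the summand into $a\,(b/c)^{i(i+1)/2}(ac^2d)^{i(i-1)/2}(b/d)^{j(j+1)/2}(acd^2)^{j(j-1)/2}$, whence the odd part sums to $a\,f(b/c,ac^2d)f(b/d,acd^2)$. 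Adding the two pieces yields \eqref{RamIden2}.

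I expect the main obstacle to be the odd-part exponent computation: verifying that $m=1-i-j$, $n=j-i$ is genuinely a bijection onto the odd sublattice and, more delicately, that after applying $ab=cd$ the four exponents collapse to exactly the target above. The cleanest way to control this is to work only with the shift-invariant quantities $w_a-w_b$, $w_c-w_d$ and $w_a+w_b+w_c+w_d$: matching $w_a-w_b=m$, $w_c-w_d=n$ and $w_a+w_b+w_c+w_d=m^2+n^2$ against the candidate summand pins down $(m,n)$ in terms of $(i,j)$ and confirms the identification without chasing the lone power of $a$ by hand. The even part is then routine by the same mechanism, and absolute convergence removes any concern about the rearrangement.
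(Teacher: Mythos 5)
Your proposal is correct, and all the details check out: with $m=i+j$, $n=i-j$ the exponent vectors of the left- and right-hand summands differ exactly by $ij$ times the shift $(1,1,-1,-1)$, i.e.\ by the factor $(ab/cd)^{ij}=1$, and the map is a bijection onto $\{(m,n):m\equiv n\pmod 2\}$; likewise $m=1-i-j$, $n=j-i$ is a bijection onto the odd sublattice (both $i=(1-m-n)/2$ and $j=(1-m+n)/2$ are integers precisely when $m+n$ is odd), and the invariants $w_a-w_b$, $w_c-w_d$, $w_a+w_b+w_c+w_d$ do pin down each monomial class, since the shift fixes all three and they span the quotient lattice. The only point of comparison to flag is that the paper does not prove this lemma at all: it simply quotes it as Entry 29 of Chapter 16 of Ramanujan's second notebook \cite{Adiga3}. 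Your argument is essentially the classical proof of that entry (a parity dissection of the double theta series, valid by absolute convergence for $|ab|=|cd|<1$), so what you have written is not an alternative route so much as a self-contained reconstruction of the cited source; it buys independence from \cite{Adiga3} at the cost of the exponent bookkeeping you have already organized cleanly via the shift-invariant quantities.
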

\noindent From the identity \eqref{RamIden2}, we obtain
\begin{align}\label{varphi2}
\varphi^2(q)=&\varphi^2(q^2)+4q\psi^2(q^4),\\
\psi^2(q)=&\varphi\left(q^4\right)\psi(q^2)+2q\psi(q^2)\psi\left(q^8\right),\label{psi2}\\
Y^2(q)=&\varphi\left(q^6\right)Y(q^2)+2q\psi\left(q^{12}\right) X(q^4),\label{Y2}\\
\label{varphi3Y1}  \varphi(q^3)Y(q)=&X^2(q^4)+qY^2(q^2).
\end{align}
 From \cite[Corollary 5]{Adiga2}, we have
\begin{equation}\label{psi1psi3}
  \psi(q)\psi(q^3)= \varphi(q^6)\psi(q^4)+q\varphi(q^2)\psi(q^{12}).
\end{equation}
In \cite[Eq. (14)]{Adiga2}, setting $k=3$,  $l=2$, $g=10$, $h=2$, $u=5$, $v=1$ and $\varepsilon_1=\varepsilon_2=1$,  we obtain
\begin{equation}\label{Y1Y2}
  Y(q)Y(q^2)=\varphi(q^{18})Y(q^3)+q\varphi(q^9)Y(q^6)+q^2 Y(q^3)Y(q^6).
\end{equation}
In \cite[Eq. (50)]{Adiga2}, using the substitution $k=3$, $l=2$ and $\varepsilon_1=\varepsilon_2=1$, we find that
\begin{equation}\label{X1X2}
  X(q)X(q^2)=\varphi(q^9)X(q^3)+qX(q^3)Y(q^3)+2q^2\psi(q^9)Y(q^3).
\end{equation}
 In \cite[Eq. (15)]{Adiga2}, putting $k=3$,  $l=2$, $g=5$, $h=1$, $u=2$, $v=1$ and $\varepsilon_1=\varepsilon_2=1$,  we deduce
\begin{equation}\label{X1Y1}
  X(q)Y(q)=X(q^3)X(q^6)+2q\psi(q^9)X(q^6)+2q^2\psi(q^{18})X(q^3).
\end{equation}
The following helpful lemma is due to Sun \cite{Sun3}, here we state as:
\begin{lemma}\cite[Theorem 1.9]{Sun3} \label{SunLemma}  For $h,l \in \mathbb{Z}$, we have
\begin{description}
  \item[(i)] For any $a \in \mathbb{Z}^+$ and $b \in \mathbb {N}$ with $b \leq a/2$, we have
  \begin{equation}\label{2.10P1ab}
  h(ah + b) + l(al + a - b) \sim ap_3(h)+\frac{l(al+a-2b)}{2}.
  \end{equation}
  \item[(ii)] We have
\begin{align}
p_4(h)+p_3(l) & \sim p_5(h) + 2p_5(l), \label{SunLemma1}\\
p_3(h)+2p_3(l) &\sim p_5(h)+p_8(l),\label{SunLemma2}\\
p_4(h)+4p_3(l) &\sim 4p_5(h)+p_8(l).\label{SunLemma3}
\end{align}
\end{description}
\end{lemma}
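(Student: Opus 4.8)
The plan is to read both parts as equalities of value-sets and to drive everything with two elementary symmetries of polygonal numbers: the trivial reflection $p_m(x)\sim p_m(-x)$ (which is \eqref{sym}, coming from $f(a,b)=f(b,a)$), and the less trivial \emph{parity-reversing} symmetry $p_3(u)=p_3(-1-u)$, in which $u$ and $-1-u$ have opposite parities. Part (i) will then follow at once from a single diagonalizing substitution together with the second symmetry, while part (ii) will be reduced, case by case, to the representation theory of the single binary form $x^2+2y^2$.

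For part (i), first I would expand $F(h,l):=h(ah+b)+l(al+a-b)=a(h^2+l^2)+bh+(a-b)l$ and pass to the rotated variables $u=h+l$, $v=h-l$. A direct computation gives
\[
F(h,l)=a\,\frac{u^2+u}{2}+\frac{v\bigl(av+2b-a\bigr)}{2}=a\,p_3(u)+\frac{v\bigl(av+a-2b\bigr)}{2},
\]
where the last equality uses the reflection $v\mapsto -v$, which preserves the parity of $v$. As $(h,l)$ runs over $\mathbb{Z}^2$, the pair $(u,v)$ runs over exactly those pairs with $u\equiv v\pmod 2$, so a priori $F$ realizes the values of $a\,p_3(u)+\tfrac12 v(av+a-2b)$ only subject to this parity constraint. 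The key point is that the constraint is vacuous: given any $(u,v)$ I may replace $u$ by $-1-u$, which leaves $a\,p_3(u)$ unchanged (since $p_3(u)=p_3(-1-u)$) while switching the parity of $u$, so I can always match the parity of $v$. Hence the constrained and unconstrained value-sets coincide, which is precisely $F(h,l)\sim a\,p_3(h)+\tfrac12 l(al+a-2b)$. I would remark that the hypotheses $a\in\mathbb{Z}^+$, $b\in\mathbb{N}$, $b\le a/2$ are needed only to guarantee that the left-hand form is a genuine mixed polygonal sum with the intended orientation; the value-set identity itself holds for every integer $b$ and every $a\ge 1$.

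For part (ii) the three identities are \emph{not} specializations of (i), because the two summands now carry different leading coefficients (a half-integer $3/2$ for $p_5$ against an integer $3$ for $p_8$, etc.), so no single rotation diagonalizes them simultaneously. Instead, for each identity I would clear denominators and complete the square. For instance, multiplying \eqref{SunLemma2} by $24$ and using $24p_5(h)+1=(6h-1)^2$, $3p_8(l)+1=(3l-1)^2$, $8p_3(l)+1=(2l+1)^2$ turns it into the assertion that
\[
\bigl\{(6h-1)^2+8(3l-1)^2\bigr\}=\bigl\{3(2h+1)^2+6(2l+1)^2\bigr\}
\]
as sets of integers; both sides are three times a value of $x^2+2y^2$ restricted to explicit congruence classes modulo $2$, $3$ and $6$, and the analogous reductions of \eqref{SunLemma1} and \eqref{SunLemma3} land on the \emph{same} form $x^2+2y^2$ with different such restrictions. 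Since $x^2+2y^2$ is the principal form of discriminant $-8$ and has class number one, an integer is represented iff every prime $\equiv 5,7\pmod 8$ occurs in it to an even power; the equivalences then reduce to checking that the two congruence-restricted sublattices carve out the same represented integers. Here the reflections $p_m(x)\sim p_m(-x)$ dispose of the sign ambiguities in $x$ and $y$, and a parity/congruence-flip in the spirit of (i) reconciles the two sublattices.

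The clean, essentially automatic step is part (i); the real work is the bookkeeping in part (ii). The main obstacle I anticipate is verifying, for each of the three reductions, that passing between the two congruence sublattices of $x^2+2y^2$ neither creates nor destroys representable values, i.e.\ that the local conditions at $2$ and $3$ match on both sides. This is exactly where the class-number-one property of $x^2+2y^2$ is indispensable, since it replaces the hard question of representation by a finite congruence check, as the numerical sample $8\notin\{p_4+p_3\}=\{p_5+2p_5\}$ (and $4\notin\{p_3+2p_3\}=\{p_5+p_8\}$) already suggests.
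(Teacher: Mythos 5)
The paper does not actually prove this lemma: it imports it verbatim from Sun \cite[Theorem~1.9]{Sun3}, so there is no in-paper argument to compare against and your attempt has to stand on its own. Your proof of part (i) does stand: the rotation $u=h+l$, $v=h-l$ gives $F(h,l)=a\,p_3(u)+\tfrac12 v(av+2b-a)$ exactly as you compute, the reflection $v\mapsto -v$ converts the linear coefficient to $a-2b$ while preserving the parity of $v$, and the observation that $p_3(u)=p_3(-1-u)$ flips the parity of $u$ and therefore kills the constraint $u\equiv v\pmod 2$. That is complete, correct, and is the standard way such ``$\sim$'' statements are established; your remark that the hypotheses on $a,b$ are not needed for the value-set identity is also right (they only normalize the presentation).

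Part (ii), however, is not proved. You correctly note that \eqref{SunLemma1}--\eqref{SunLemma3} are not specializations of (i), you set up the completed-square reformulation for \eqref{SunLemma2} (both sides become $24n+9$ and the claim is the equality of $\{(6h-1)^2+8(3l-1)^2\}$ and $\{3(2h+1)^2+6(2l+1)^2\}$), and then you stop, explicitly deferring ``the real work.'' The deferred step is the whole content of the lemma, and the shortcut you invoke does not work as stated: class number one for $x^2+2y^2$ tells you which integers $n$ are represented, but here the congruence conditions are imposed on the \emph{variables} ($u\equiv\pm1\pmod 6$, $v\equiv\pm2\pmod 6$ on one side versus $s,t$ odd after extracting the factor $3$ on the other), and the set of $n$ represented with variables in prescribed residue classes is not in general cut out by congruence conditions on $n$ alone. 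What is actually needed is to show that every representation satisfying one set of side conditions is carried to one satisfying the other, e.g.\ via the composition identity $(s-2t)^2+2(s+t)^2=3(s^2+2t^2)$ together with unique factorization in $\mathbb{Z}[\sqrt{-2}]$ (to see that every representation of $3m$ arises this way), followed by a parity/residue chase on $s,t$; and this must be done separately for each of \eqref{SunLemma1}, \eqref{SunLemma2}, \eqref{SunLemma3}. Until that verification is written out, the three equivalences remain assertions, so as a proof of the lemma the proposal is incomplete; if you only intend to use the lemma as the paper does, the honest course is to cite Sun's Theorem~1.9 rather than to re-derive it.
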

Using \eqref{2.10P1ab}, it is easy to verify the following:
\begin{align}
  p_3(h)+p_3(l) & \sim p_4(h)+2p_3(l), \label{eqv1}\\
  2p_5(h)+p_8(l) & \sim 3p_3(h)+p_5(l).\label{eqv2}
\end{align}
In \cite[Eq. (1)]{WuSun}  Wu and  Sun proved that
\begin{equation}\label{WuSunequiv}
  p_3(h)+p_5(l)\sim p_5(h)+3p_5(l)
\end{equation}
\section{Main results} \label{MainResults}
In this section, we present many beautiful theorems and prove them.
For convenience, in the rest of the paper,  we use the notation $ap_{r}+bp_{s}+cp_{t}+dp_{u}$ instead of
$ap_{r}(h)+bp_{s}(l)+cp_{t}(m)+dp_{u}(n)$.
\begin{theorem}\label{thm1} The following sums:
\begin{center}
\begin{tabular}{ccc}
$2p_5+4p_5+p_8+p_8,$&$ 4p_5+p_8+p_8+p_8,$& $2p_5+p_8+p_8+2p_8,$\\
$p_8+p_8+p_8+2p_8,$&$4p_5+p_8+2p_8+2p_8,$&$p_8+2p_8+2p_8+2p_8,$ \\
$3p_4+p_5+2p_5+p_8,$&$6p_3+p_5+2p_5+2p_5,$&$3p_4+p_5+p_8+p_8,$\\
$6p_3+p_5+2p_5+p_8,$&$ 3p_4+2p_5+p_8+p_8,$&$3p_4+p_8+p_8+p_8,$\\
$6p_3+2p_5+2p_5+p_8,$&$ 6p_3+2p_5+p_8+p_8,$&$3p_3+p_5+p_5+4p_5,$\\
$3p_3+p_5+p_5+2p_8,$&$ 3p_3+3p_4+p_5+p_8,$&$3p_3+6p_3+p_5+2p_5$,\\
$3p_4+p_8+p_8+2p_8,$&$ 6p_3+2p_5+p_8+2p_8,$&$3p_3+p_5+4p_5+p_8,$\\
$ 3p_3+p_5+p_8+2p_8,$&$ p_3+p_4+3p_4+p_8, $&$ p_3+6p_3+p_4+2p_5,$\\
$p_3+2p_3+3p_4+p_8,$&$  p_3+6p_3+p_4+p_8,$&$ 2p_5+4p_5+p_8+2p_8,$\\
$ 4p_5+p_8+p_8+2p_8,$&$2p_5+p_8+2p_8+2p_8,$&$  p_8+p_8+2p_8+2p_8,$\\
$ p_5+2p_5+4p_5+p_8,$&$ p_5+2p_5+p_8+2p_8,$&$ p_5+4p_5+p_8+p_8,$\\
$ p_5+p_8+p_8+2p_8,$&$ p_3+p_4+4p_5+p_8,$&$p_3+p_4+p_8+2p_8,$\\
$p_3+2p_3+2p_5+4p_5, $&$ p_3+2p_3+2p_5+2p_8,$&$2p_3+2p_4+p_5+p_8,$\\
$ 2p_3+4p_3+p_5+p_8,$&$ p_3+2p_5+4p_5+6p_5,$&$ p_3+2p_5+6p_5+2p_8,$\\
$ 2p_5+4p_5+p_8+3p_8, $&$  4p_5+p_8+p_8+3p_8,$&$ 2p_5+p_8+2p_8+3p_8,$\\
$ p_8+p_8+2p_8+3p_8,$&$ 4p_5+p_8+2p_8+6p_8,$&$ p_8+2p_8+2p_8+6p_8,$\\
$ 3p_4+p_8+p_8+3p_8, $&$ 6p_3+2p_5+p_8+3p_8,$&$ 3p_4+p_8+p_8+4p_8,$\\
$6 p_3+2p_5+p_8+4p_8, $&$ 3p_3+p_5+4p_5+2p_8,$&$3 p_3+p_5+2p_8+2p_8,$\\
$ 3p_4+p_8+p_8+5p_8,$&$6p_3+2p_5+p_8+5p_8,$&$ 3p_4+p_8+p_8+6p_8,$\\
$ 6p_3+2p_5+p_8+6p_8,$&$ 3p_3+p_5+4p_5+3p_8,$&$ 3p_3+p_5+2p_8+3p_8,$\\
$ 4p_5+p_8+2p_8+3p_8,$&$ p_8+2p_8+2p_8+3p_8,$&$ 4p_5+p_8+2p_8+5p_8,$\\
$ p_8+2p_8+2p_8+5p_8,$&$2p_3+2p_4+p_8+3p_8,$ &$ 2p_3+4p_3+p_8+3p_8,$\\
\end{tabular}
\end{center}
are universal over $\mathbb{Z}$.
\end{theorem}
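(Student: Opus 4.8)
The plan is to treat every row of the table uniformly through Corollary~\ref{coro1.1}, reducing each target to sums whose universality is already known---either from the literature (Lagrange's theorem, the Ramanujan--Dickson list of universal quaternary diagonal forms, Sun's octagonal triples together with the Ju--Oh confirmations, and the pentagonal results of Ju and of Meng--Sun) or from entries established earlier in the very same theorem. The starting point for a target $ap_r+bp_s+cp_t+dp_u$ is its generating theta product: writing $\psi,\varphi,X,Y$ for the generating functions of $p_3,p_4,p_5,p_8$ from \eqref{2.8}, \eqref{2.7}, \eqref{X(q)} and \eqref{Y(q)}, the sum is encoded by the product of the four corresponding functions evaluated at $q^a,q^b,q^c,q^d$; for instance $2p_5+4p_5+p_8+p_8$ is encoded by $X(q^2)X(q^4)Y^2(q)$.

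First I would repackage the product identities of Section~\ref{Preliminary} into genuine $k$-dissections of \emph{four}-fold theta products. Each of \eqref{Y2}, \eqref{Y1Y2}, \eqref{X1X2} and \eqref{X1Y1}, as well as the squaring relations \eqref{varphi2} and \eqref{psi2}, already expresses a two-fold product as a $k$-dissection $\sum_r q^{r-1}(\text{product of thetas in }q^k)$ with $k\in\{2,3\}$; multiplying such an identity by the two remaining factors, \emph{provided} those factors are themselves powers of $q^k$, yields an identity of exactly the shape \eqref{Q1.1}. Applying \eqref{Y2} with $k=2$ to $Y^2(q)$ inside $X(q^2)X(q^4)Y^2(q)$, for example, produces two terms whose associated $8$-tuples decode---using the equivalence $p_6\sim p_3$ from \eqref{2.8.2}---to $3p_4+p_5+2p_5+p_8$ and $6p_3+p_5+2p_5+2p_5$. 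By Corollary~\ref{coro1.1} the original sum is universal if and only if both of these are, and both already appear in the table.

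The bulk of the work is then bookkeeping: for each listed quadruple I would (i) select a factorization of its theta product together with the dissection identity that keeps every argument inside a single $q^k$; (ii) read off the component $8$-tuples on the right of \eqref{Q1.1}; (iii) rewrite each component into a recognizable polygonal sum using the symmetry \eqref{sym}, Lemma~\ref{SunLemma} and the equivalences \eqref{SunLemma1}--\eqref{WuSunequiv}; and (iv) certify each component as universal, either directly from a base result or by citing an earlier table entry. The pure-octagonal rows (such as $p_8+p_8+p_8+2p_8$) act as anchors, being exactly Sun's triples, and the mixed rows cascade down to them and to the pentagonal and square base cases.

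The hard part will be two intertwined issues. First, the criterion in Corollary~\ref{coro1.1} is an \emph{if and only if for every} $r$, so a single dissection component that fails to be universal destroys the reduction; the factorization and the modulus $k$ must therefore be chosen so that \emph{every} piece lands among the known or previously-proved sums, and this is delicate because distinct factorizations of the same product yield genuinely different component tuples (and only some factorizations respect the ``powers of $q^k$'' constraint above). Second, since many rows reduce to other rows, I must order the table so that the induced dependency graph is acyclic and every chain terminates at a true base case (Lagrange, Ramanujan--Dickson, Sun, Ju--Oh, or Ju/Meng--Sun); verifying this acyclicity and the absence of circular justification across the roughly five dozen entries is the principal obstacle, more so than any individual theta-function manipulation.
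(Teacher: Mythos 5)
Your toolkit is the right one---the dissection identities of Section~\ref{Preliminary}, Corollary~\ref{coro1.1}, the equivalences \eqref{SunLemma1}--\eqref{WuSunequiv}, and Sun's octagonal results as external input---and you correctly identify the two real difficulties (every dissection component must be universal, and the dependency graph must be acyclic and grounded). But the direction in which you apply the corollary is inverted relative to the paper, and your one worked example shows why this matters. You propose to start from a \emph{target} row, dissect its theta product, and check that all components are ``already known''; concretely, you dissect $X(q^2)X(q^4)Y^2(q)$ (encoding $2p_5+4p_5+p_8+p_8$) via \eqref{Y2} into components encoding $3p_4+p_5+2p_5+p_8$ and $6p_3+p_5+2p_5+2p_5$, and declare these known because ``both already appear in the table.'' In the paper the logical flow is exactly the reverse: those two sums are \emph{only} known because they are components of the dissection of $2p_5+4p_5+p_8+p_8$, which in turn is known because it is a component of the dissection of $Y(q)Y(q^2)Y^2(q^4)$, i.e.\ of Sun's externally established $p_8+2p_8+4p_8+4p_8$. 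As written, your reduction for this row is circular.

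The fix is to run the cascade in the direction the paper does: take a sum whose universality is known from outside the table (Sun's $p_8+2p_8+4p_8+4p_8$, $p_8+2p_8+2p_8+4p_8$, $p_8+2p_8+4p_8+8p_8$, $p_8+2p_8+4p_8+12p_8$, $p_8+p_8+2p_8+6p_8$, \dots, and Ju--Oh's $p_8+2p_8+3p_8+6p_8$---note none of these are themselves table entries), dissect \emph{its} generating product, and harvest \emph{all} the components as newly proved table entries; then iterate, dissecting the entries just obtained (sometimes after passing to an equivalent form via \eqref{SunLemma1}--\eqref{WuSunequiv}) to reach the rest. Since Corollary~\ref{coro1.1} is an ``if and only if,'' the direction ``LHS known $\Rightarrow$ every component universal'' is available and is the efficient one: each identity yields two to four new entries at once, and the hard constraint you worry about (that \emph{every} component be universal) is automatically satisfied rather than needing to be engineered. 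Your proposal as stated would require finding, for each of the roughly sixty rows, a dissection all of whose components bottom out at genuinely external results, which is not how the identities \eqref{Y}, \eqref{Y2}, \eqref{Y1Y2}, \eqref{X1X2}, \eqref{X1Y1} actually line up.
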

\begin{proof} Using \eqref{Y}, we find that
\begin{align}\label{Q1}
 Y(q)Y(q^2)Y^2(q^4)=&X(q^8)X(q^{16})Y^2(q^4)+qX(q^{16})Y^3(q^4)+q^2X(q^8)Y^2(q^4)Y(q^8)\nonumber \\&+q^3Y^3(q^4)Y(q^8).
\end{align}
Sun \cite{Sun6} proved that the sum $p_8+2p_8+4p_8+4p_8$ is universal. Thus the identity
  \eqref{Q1} with the  help of Corollary \ref{coro1.1}, yields the universality of the
  sums $2p_5+4p_5+p_8+p_8,$  $4p_5+p_8+p_8+p_8,$ $2p_5+p_8+p_8+2p_8$ and
$p_8+p_8+p_8+2p_8$.\\
Again,  using \eqref{Y}, we have
\begin{align}\label{Q1a}
 Y(q)Y(q^2)Y^2(q^4)=X(q^8)Y(q^{2})Y^2(q^4)+qY(q^2)Y^3(q^4).
\end{align}
Thus the universality of the sum $p_8+2p_8+4p_8+4p_8$ and the identity
  \eqref{Q1a} with the  help of Corollary \ref{coro1.1}, imply  the universality of the
  sums $4p_5+p_8+2p_8+2p_8$ and $p_8+2p_8+2p_8+2p_8$.
Using \eqref{Y2}, we obtain
\begin{equation}\label{Q2}
  X(q^2)X(q^{4})Y^2(q)=\varphi\left(q^6\right) X(q^2)X(q^{4}) Y(q^2)+2q\psi\left(q^{12}\right)X(q^2) X^2(q^4).
\end{equation}
Since $2p_5+4p_5+p_8+p_8$  is universal. Thus the identity
  \eqref{Q2} with the  help of Corollary \ref{coro1.1}, implies the universality of the sums
$3p_4+p_5+2p_5+p_8$ and $6p_3+p_5+2p_5+2p_5$.\\
From \eqref{Y2}, we have
\begin{equation}\label{Q3}
  X(q^2)Y^2(q)Y(q^2)=\varphi\left(q^6\right) X(q^2) Y^2(q^2)+2q\psi\left(q^{12}\right)X(q^2) X(q^4)Y(q^2).
\end{equation}
Since $2p_5+p_8+p_8+2p_8$  is universal.  Thus the identity
  \eqref{Q3} with the  help of Corollary \ref{coro1.1}, implies that the sums
$3p_4+p_5+p_8+p_8$ and  $6p_3+p_5+2p_5+p_8$ are universal over $\mathbb{Z}$.\\
Using \eqref{Y} and \eqref{Y2}, we find that
\begin{align}\label{Q4}
  Y(q)Y^2(q^2)Y(q^4)=& \varphi\left(q^{12}\right) X(q^8) Y^2(q^4)+q \varphi\left(q^{12}\right) Y^3(q^4)\nonumber\\
  &+2q^2\psi\left(q^{24}\right)X^2(q^8) Y(q^4)+2q^3\psi\left(q^{24}\right)X(q^8) Y^2(q^4).
\end{align}
Sun \cite{Sun6} showed that the sum $p_8+2p_8+2p_8+4p_8$ is universal.
Thus the identity \eqref{Q4} with the  help of Corollary \ref{coro1.1}, implies that the sums
$3p_4+2p_5+p_8+p_8,$ $3p_4+p_8+p_8+p_8,$  $6p_3+2p_5+2p_5+p_8$ and  $6p_3+2p_5+p_8+p_8$ are universal.\\
Using \eqref{Y}, we get
\begin{equation}\label{Q5}
  \psi\left(q^{6}\right)X^2(q^2) Y(q)= \psi\left(q^{6}\right)X^2(q^2)X\left(q^8\right)+q \psi\left(q^{6}\right)X^2(q^2)Y\left(q^4\right).
\end{equation}
Since $6p_3+2p_5+2p_5+p_8$  is universal.  Thus the identity
  \eqref{Q5} with the  help of Corollary \ref{coro1.1}, implies
universality of the sums $3p_3+p_5+p_5+4p_5$  and $3p_3+p_5+p_5+2p_8$.\\
From \eqref{Y2}, we have
\begin{equation}\label{Q6}
\psi\left(q^{6}\right)X(q^2)Y^2(q)=\varphi\left(q^6\right)\psi\left(q^{6}\right)X(q^2)Y(q^2)+2q\psi\left(q^{6}\right)\psi\left(q^{12}\right)X(q^2) X(q^4).
\end{equation}
 Since $6p_3+2p_5+p_8+p_8$ is universal. Thus the identity
  \eqref{Q6} implies that the sums $3p_3+3p_4+p_5+p_8$ and $3p_3+6p_3+p_5+2p_5$ are universal.\\
Using  \eqref{Y2}, we have
\begin{equation}\label{Q7}
Y^2(q)Y(q^2)Y(q^4)=\varphi\left(q^6\right)Y^2(q^2)Y(q^4)+2q\psi\left(q^{12}\right) X(q^4)Y(q^2)Y(q^4).
\end{equation}
Sun \cite{Sun6} proved that the sum $p_8+p_8+2p_8+4p_8$ is universal.
Thus  \eqref{Q7} yields the universality of the sums  $3p_4+p_8+p_8+2p_8$ and $6p_3+2p_5+p_8+2p_8$.\\
Using \eqref{Y}, we find that
\begin{equation}\label{Q8}
\psi\left(q^{6}\right)X(q^2) Y(q) Y(q^2) = \psi\left(q^{6}\right)X(q^2)X\left(q^8\right)Y(q^2)+q \psi\left(q^{6}\right)X(q^2)Y(q^2)Y\left(q^4\right).
\end{equation}
So that \eqref{Q8} and  the universality of the sum $6p_3+2p_5+p_8+2p_8$ yield the universality of the sums
$3p_3+p_5+4p_5+p_8$ and  $3p_3+p_5+p_8+2p_8$.\\
From  \eqref{Y2}, we obtain
\begin{equation}\label{Q9}
\varphi\left(q^2\right) \psi\left(q^{2}\right) Y^2(q)= \varphi\left(q^2\right)\varphi\left(q^6\right)\psi\left(q^{2}\right)Y(q^2)+2q\varphi\left(q^2\right) \psi\left(q^{2}\right) \psi\left(q^{12}\right) X(q^4).
\end{equation}
Since  $3p_3+p_5+4p_5+p_8$  is universal and in view of \eqref{eqv2} and \eqref{SunLemma1} we have $3p_3+p_5+4p_5+p_8 \sim  2p_5+4p_5+p_8+p_8 \sim 2p_3+2p_4+p_8+p_8$ which means that the sum  $2p_3+2p_4+p_8+p_8$  is universal. Thus \eqref{Q9} yields the universality of the sums
$ p_3+p_4+3p_4+p_8$ and  $p_3+6p_3+p_4+2p_5$.\\
Using \eqref{psi1psi3}, we deduce
\begin{equation}\label{Q10}
  \psi(q) \psi(q^2) \psi(q^3) Y(q^2)= \varphi(q^6)\psi(q^2) \psi(q^4)Y(q^2)+q\varphi(q^2)\psi(q^2)\psi(q^{12})Y(q^2).
\end{equation}
Since $3p_3+p_5+p_8+2p_8$ is universal and in view of \eqref{SunLemma2} we have $3p_3+p_5+p_8+2p_8 \sim p_3+2p_3+ 3p_3+2p_8$
and hence $p_3+2p_3+ 3p_3+2p_8$ is universal. Thus \eqref{Q10} yields the universality of the sums
$p_3+2p_3+3p_4+p_8$ and  $ p_3+6p_3+p_4+p_8$.\\
Using \eqref{Y}, we find that
\begin{align}\label{Q11}
Y(q)Y(q^2)Y(q^4)Y(q^8)=& X\left(q^{8}\right)X(q^{16}) Y(q^4) Y(q^8)+qX(q^{16}) Y^2(q^4) Y(q^8)\nonumber\\
&+ q^2 X(q^{8}) Y(q^4) Y^2(q^8)+ q^3  Y^2(q^4) Y^2(q^8).
\end{align}
Sun \cite{Sun6} proved that the sum $p_8+2p_8+4p_8+8p_8$ is universal.
Thus the identity  \eqref{Q11} yields the universality of the sums
$2p_5+4p_5+p_8+2p_8,$  $4p_5+p_8+p_8+2p_8,$  $2p_5+p_8+2p_8+2p_8$  and  $p_8+p_8+2p_8+2p_8$.\\
From \eqref{Y}, we obtain
\begin{align}\label{Q12}
X\left(q^{2}\right)X(q^{4}) Y(q) Y(q^2)=X\left(q^{2}\right)X(q^{4}) X(q^8) Y(q^2)+q X\left(q^{2}\right)X(q^{4}) Y(q^2) Y(q^4).
\end{align}
Since $2p_5+4p_5+p_8+2p_8$  is universal over $\mathbb{Z}$. So that \eqref{Q12} yields the universality of the sums
$p_5+2p_5+4p_5+p_8$ and $p_5+2p_5+p_8+2p_8.$\\
From \eqref{Y}, we obtain
\begin{align}\label{Q13}
X(q^{2}) Y(q) Y^2(q^2)=X(q^{2}) X(q^8) Y^2(q^2)+qX(q^{2})Y^2(q^2) Y(q^4).
\end{align}
Since $2p_5+p_8+2p_8+2p_8$ is universal. Thus \eqref{Q12} with the  help of Corollary \ref{coro1.1}, implies that the sums
$p_5+4p_5+p_8+p_8$ and  $p_5+p_8+p_8+2p_8$ are universal. \\
Using \eqref{Y}, we find that
\begin{equation}\label{Q15}
  \varphi\left(q^2\right) \psi\left(q^{2}\right) Y(q) Y(q^2)=\varphi\left(q^2\right) \psi\left(q^{2}\right)X\left(q^8\right)Y(q^2)+q\varphi\left(q^2\right) \psi\left(q^{2}\right)Y(q^2)Y\left(q^4\right).
\end{equation}
As $p_5+4p_5+p_8+p_8$ is universal, in  view of   \eqref{SunLemma1}, we have  $p_5+4p_5+p_8+p_8 \sim 2p_3+2p_4+p_8+2p_8, $
this means that the sum $2p_3+2p_4+p_8+2p_8 $ is universal. Thus \eqref{Q15} yields the universality of the sums
$p_3+p_4+4p_5+p_8$ and $p_3+p_4+p_8+2p_8.$\\
Using \eqref{Y}, we find that
\begin{equation}\label{Q16}
\psi\left(q^{2}\right)\psi\left(q^{4}\right)X(q^4)Y(q)=\psi\left(q^{2}\right)\psi\left(q^{4}\right)X(q^4)X\left(q^8\right)+q\psi\left(q^{2}\right)\psi\left(q^{4}\right)X(q^4)Y\left(q^4\right).
\end{equation}
As $p_5+4p_5+p_8+p_8$ is universal, in  view of   \eqref{SunLemma2}, we have  $p_5+4p_5+p_8+p_8 \sim 2p_3+4p_4+4p_5+2p_8, $
this means that the sum $2p_3+4p_4+4p_5+2p_8 $ is universal. Thus \eqref{Q16} implies the universality of the sums
$p_3+2p_3+2p_5+4p_5$ and $p_3+2p_3+2p_5+2p_8.$\\
In a similar way, in view of \eqref{SunLemma3} and \eqref{WuSunequiv}, we have $2p_5+4p_5+p_8+2p_8 \sim 4p_3+p_4+2p_5+2p_8$  and   $2p_3+4p_3+4p_5+p_8 \sim 2p_3+4p_5+12p_5+p_8$ and so that $4p_3+p_4+2p_5+2p_8$  and $2p_3+4p_5+12p_5+p_8$  are universal. Using \eqref{Y}, twice we deduce the universality of the sums
$2p_3+2p_4+p_5+p_8,$ $2p_3+4p_3+p_5+p_8,$  $p_3+2p_5+4p_5+6p_5$ and $p_3+2p_5+6p_5+2p_8.$\\
Using \eqref{Y}, we obtain
\begin{align}\label{Q17}
 Y(q)Y(q^2)Y(q^4)Y(q^{12})=& X\left(q^{8}\right)X(q^{16}) Y(q^4) Y(q^{12})+q X(q^{16}) Y^2(q^4) Y(q^{12})\nonumber\\
&+ q^2 X(q^{8}) Y(q^4) Y(q^8)Y(q^{12})+ q^3  Y^2(q^4) Y(q^8)Y(q^{12}).
\end{align}
Since $p_8+2p_8+4p_8+12p_8$ is universal (see \cite{Sun6}). Thus the identity
  \eqref{Q17} implies that the sums $2p_5+4p_5+p_8+3p_8,$  $4p_5+p_8+p_8+3p_8,$
 $2p_5+p_8+2p_8+3p_8$ and  $p_8+p_8+2p_8+3p_8$ are universal. Again, using \eqref{Y}, the universality of $p_8+2p_8+4p_8+12p_8$
yields the universality of the sums $4p_5+p_8+2p_8+6p_8$ and $p_8+2p_8+2p_8+6p_8$.\\
Sun \cite{Sun6} proved that the sums $p_8+p_8+2p_8+6p_8$  and $p_8+p_8+2p_8+p_8$ are universal.
 Using \eqref{Y2}, twice we deduce the universality of the sums $3p_4+p_8+p_8+3p_8,$ $6p_3+2p_5+p_8+3p_8,$
$3p_4+p_8+p_8+4p_8$  and $6 p_3+2p_5+p_8+4p_8$. Similarly, using \eqref{Y}, the universality of $6 p_3+2p_5+p_8+4p_8$
implies the universality of the sums $3p_3+p_5+4p_5+2p_8$ and $3 p_3+p_5+2p_8+2p_8.$\\
Sun \cite{Sun6} determined the universality of   the sums $p_8+p_8+2p_8+10p_8$  and $p_8+p_8+2p_8+12p_8$. Thus the identity
  \eqref{Y2} implies the universality of the sums $3p_4+p_8+p_8+5p_8,$
$6p_3+2p_5+p_8+5p_8,$ $3p_4+p_8+p_8+6p_8$ and  $6p_3+2p_5+p_8+6p_8.$ The universality of $6p_3+2p_5+p_8+6p_8$ with the help of
\eqref{Y}, yields the universality of the sums  $3p_3+p_5+4p_5+3p_8$ and  $3p_3+p_5+2p_8+3p_8$.\\
Sun \cite{Sun6} determined the universality of   the sums $p_8+2p_8+4p_8+6p_8$  and $p_8+2p_8+4p_8+10p_8$.
 Thus the identity  \eqref{Y} implies the universality of the sums $4p_5+p_8+2p_8+3p_8,$
$ p_8+2p_8+2p_8+3p_8,$ $4p_5+p_8+2p_8+5p_8$ and $ p_8+2p_8+2p_8+5p_8.$\\
As $4p_5+p_8+2p_8+6p_8$ is universal and  $4p_5+p_8+2p_8+6p_8 \sim p_4+4p_3+2p_8+6p_8$ (by \eqref{SunLemma3}).
Thus the universality of the sum $p_4+4p_3+2p_8+6p_8$ with make use of \eqref{varphi}, we deduce that the sums $2p_3+2p_4+p_8+3p_8$ and  $2p_3+4p_3+p_8+3p_8$ are universal over $\mathbb{Z}$. This completes the proof of the theorem.
\end{proof}
\begin{theorem}\label{thm2} The following sums
 \begin{center}
\begin{tabular}{ccc}
$6p_4+p_8+p_8+2p_8,$& $3p_4+p_8+2p_8+2p_8,$&$6p_4+p_8+p_8+3p_8,$\\
$3p_4+p_8+2p_8+3p_8,$&$p_4+6p_4+p_8+p_8,$& $p_4+3p_4+p_8+2p_8,$\\
 $p_4+p_8+p_8+2p_8,$&$p_3+p_4+p_5+2p_5,$ & $p_3+3p_3+p_4+2p_5,$\\
  $p_3+6p_3+p_4+p_5,$&$p_3+2p_3+3p_4+p_5,$& $p_3+2p_3+p_5+p_8,$\\
  $p_3+2p_3+3p_3+p_8,$&&\\
\end{tabular}
\end{center}
are universal over $\mathbb{Z}$.
\end{theorem}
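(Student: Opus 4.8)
The plan is to treat Theorem \ref{thm2} exactly as Theorem \ref{thm1} was treated: every listed sum will be matched, through one of the product identities of Section \ref{Preliminary}, either to a sum already known to be universal or to the companion terms produced by such an identity, and then Corollary \ref{coro1.1} is invoked. Throughout I read off the correspondence $\varphi(q^a)\leftrightarrow ap_4$, $\psi(q^a)\leftrightarrow ap_3$, $X(q^a)\leftrightarrow ap_5$, $Y(q^a)\leftrightarrow ap_8$, keeping in mind that on the right-hand side of \eqref{Q1.1} the common modulus $k$ rescales every coefficient by $1/k$, while the left-hand factor uses the unscaled correspondence. The allowable anchors are Sun's octagonal theorems, the sums proved in Theorem \ref{thm1}, and the quadruple lists of Section \ref{Introduction}.

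I would first dispose of the seven sums built from $p_4$ and $p_8$ only. The sum $6p_4+p_8+p_8+2p_8$ comes from applying \eqref{Y2} to one $Y^2$ factor of $Y^4(q)=p_8+p_8+p_8+p_8$, which is Sun's; the term $\varphi(q^6)Y(q)Y(q)Y(q^2)$ reads off as the desired sum. The sum $3p_4+p_8+2p_8+2p_8$ instead comes from multiplying \eqref{varphi3Y1} by $Y^2(q^2)$, giving
\[
\varphi(q^3)Y(q)Y^2(q^2)=X^2(q^4)Y^2(q^2)+qY^4(q^2),
\]
whose two companions $2p_5+2p_5+p_8+p_8$ and $p_8+p_8+p_8+p_8$ are universal. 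The remaining five arise in two clusters from the three-term identity \eqref{Y1Y2}: multiplied by $Y(q^3)Y(q^9)$ it turns the universal form $p_8+2p_8+3p_8+9p_8$ into $6p_4+p_8+p_8+3p_8$, $3p_4+p_8+2p_8+3p_8$ and $p_8+p_8+2p_8+3p_8$ (at $k=3$), and multiplied by $\varphi(q^3)Y(q^3)$ it turns the now-established $3p_4+p_8+2p_8+3p_8$ into $p_4+6p_4+p_8+p_8$, $p_4+3p_4+p_8+2p_8$ and $p_4+p_8+p_8+2p_8$. The point is that \eqref{Y1Y2} carries the square-to-octagonal coefficient ratios $6:1$ and $3:2$, which \eqref{Y2} and \eqref{varphi3Y1}, forcing the ratio $3:1$, cannot reach.

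For the mixed sums involving $p_3$ I would lean first on the equivalences. Thus \eqref{SunLemma2} gives $p_3+2p_3\sim p_5+p_8$, so $p_3+2p_3+3p_3+p_8\sim 3p_3+p_5+p_8+p_8$, and then \eqref{eqv2} gives $3p_3+p_5\sim 2p_5+p_8$, landing on the universal $2p_5+p_8+p_8+p_8$; the same substitution reduces $p_3+2p_3+3p_4+p_5$ to the universal $3p_4+p_5+p_5+p_8$, and reduces $p_3+2p_3+p_5+p_8$ to $p_5+p_5+p_8+p_8$, while \eqref{SunLemma1} folds $p_3+p_4+p_5+2p_5$ onto $p_5+p_5+2p_5+2p_5$. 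The residual cases $p_3+3p_3+p_4+2p_5$ and $p_3+6p_3+p_4+p_5$ resist a single equivalence and I expect to reach them as one term of \eqref{t-s2psi}, \eqref{X1X2} or \eqref{psi1psi3} multiplied by a suitable triangular/pentagonal factor, with a form from Theorem \ref{thm1} (such as $p_3+6p_3+p_4+2p_5$ or $p_3+6p_3+p_4+p_8$) serving as anchor.

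The hard part will not be any individual identity but the matching problem: for each target I must choose the identity, the multiplier, the modulus $k$, and the chain of equivalences so that \emph{every} companion term on the other side is genuinely already universal. This is delicate because most of the obvious parents fail — forms with all-even coefficients such as $2p_8+2p_8+2p_8+2p_8$, or forms like $p_8+p_8+4p_8+4p_8$ and $p_4+4p_8+4p_8+8p_8$, are not universal (each misses a small value) — so the coefficient ratio built into an identity effectively dictates which identity can hit which target. The step I would scrutinize most is the final bookkeeping: verifying that the anchors form an acyclic chain resting only on Sun's theorems, on Theorem \ref{thm1}, and on the independently established quadruple lists of Section \ref{Introduction}, so that no circular dependence is introduced among the sums being proved.
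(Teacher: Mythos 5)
Your global strategy (dissection identities from Section~\ref{Preliminary} fed into Corollary~\ref{coro1.1}, with the equivalences of Lemma~\ref{SunLemma} to move between anchors) is indeed the paper's strategy, and your two \eqref{Y1Y2} clusters coincide exactly with the paper's proof of five of the seven pure $p_4$/$p_8$ sums: \eqref{Y1Y2} multiplied by $Y(q^3)Y(q^9)$ against Ju--Oh's $p_8+2p_8+3p_8+9p_8$, then multiplied by $\varphi(q^3)Y(q^3)$ against the resulting $3p_4+p_8+2p_8+3p_8$. The remaining sums, however, have genuine gaps. First, deriving $6p_4+p_8+p_8+2p_8$ from $Y^4(q)=\varphi(q^6)Y^2(q)Y(q^2)+2q\psi(q^{12})X(q^4)Y^2(q)$ is invalid: Corollary~\ref{coro1.1} requires every theta function on the right of \eqref{Q1.1} to have the form $f(q^{ka},q^{kb})$, so that the $r$-th product is supported only on exponents $\equiv r-1\pmod k$; here $Y(q)=f(q,q^5)$ survives on the right, so with $k=2$ both products contribute to both residue classes and the dissection argument collapses. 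The paper obtains this sum from the same \eqref{Y1Y2} machine, multiplied by $Y(q^3)Y(q^6)$ and anchored on Ju--Oh's $p_8+2p_8+3p_8+6p_8$. Second, your route to $3p_4+p_8+2p_8+2p_8$ via $\varphi(q^3)Y(q)Y^2(q^2)=X^2(q^4)Y^2(q^2)+qY^4(q^2)$ is the paper's identity \eqref{Q20} run backwards: in Theorem~\ref{thm3} that identity is used to deduce $2p_5+2p_5+p_8+p_8$ \emph{from} $3p_4+p_8+2p_8+2p_8$, whereas you take $2p_5+2p_5+p_8+p_8$ as the anchor. That sum is one of the conclusions of this paper (item 2 of Section~\ref{Introduction}), not an independently established fact, so as written this step is circular.

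The same difficulty infects the mixed sums. Your equivalence chains terminate at $2p_5+p_8+p_8+p_8$, $3p_4+p_5+p_5+p_8$, $p_5+p_5+p_8+p_8$ and $p_5+p_5+2p_5+2p_5$; the first two are proved only in Theorem~\ref{thm3}, and their proofs there descend from the very sums of Theorem~\ref{thm2} you are establishing, while the last two are proved nowhere in the paper (they can be rescued by the Bosma--Kane and Ju finiteness criteria, respectively, but you never invoke these). Your declared list of admissible anchors, which includes ``the quadruple lists of Section~\ref{Introduction}'', i.e.\ the paper's own conclusions, therefore does not yield the acyclic foundation you say must be checked. Finally, $p_3+3p_3+p_4+2p_5$ and $p_3+6p_3+p_4+p_5$ are left as a guess. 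The paper disposes of all five mixed sums with two clean dissections you did not find: \eqref{X1Y1} multiplied by $\varphi(q^3)\psi(q^3)$, whose left-hand side is the Theorem~\ref{thm1} sum $3p_3+3p_4+p_5+p_8$ and whose three terms give $p_3+p_4+p_5+2p_5$, $p_3+3p_3+p_4+2p_5$ and $p_3+6p_3+p_4+p_5$ at $k=3$; and \eqref{X1X2} multiplied by $\psi(q^3)\psi(q^6)$, whose left-hand side is the Theorem~\ref{thm1} sum $3p_3+6p_3+p_5+2p_5$ and whose three terms give $p_3+2p_3+3p_4+p_5$, $p_3+2p_3+p_5+p_8$ and $p_3+2p_3+3p_3+p_8$.
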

\begin{proof}Using \eqref{Y1Y2}, we obtain
\begin{align}\label{Q18}
 Y(q)Y(q^2)Y(q^3)Y(q^6)=&\varphi(q^{18})Y^2(q^3)Y(q^6)+q\varphi(q^9)Y(q^3)Y^2(q^6)\nonumber\\
&+q^2 Y^2(q^3)Y^2(q^6).
\end{align}
Ju and  Oh \cite{JuOh} proved that the sum $p_8+2p_8+3p_8+6p_8$ is universal.
Thus \eqref{Q18} implies the universality of the sums
$6p_4+p_8+p_8+2p_8,$ $3p_4+p_8+2p_8+2p_8$ and  $p_8+p_8+2p_8+2p_8.$ In a similar way, the
universality of the sum $p_8+2p_8+3p_8+9p_8$ (by Ju and  Oh \cite{JuOh}) with the help of
\eqref{Y1Y2}, implies the universality of the sums $6p_4+p_8+p_8+3p_8,$ $3p_4+p_8+2p_8+3p_8$ and  $p_8+p_8+2p_8+3p_8.$ Similarly,
since $3p_4+p_8+2p_8+3p_8$ is universal then the sums $p_4+6p_4+p_8+p_8,$ $p_4+3p_4+p_8+2p_8$ and  $p_4+p_8+p_8+2p_8$ are universal over $\mathbb{Z}$.\\
Using \eqref{X1Y1}, we have
\begin{align}\label{Q19}
\varphi(q^{3}) \psi(q^{3}) X(q)Y(q)= &\varphi(q^{3}) \psi(q^{3}) X(q^3)X(q^6)+2q \varphi(q^{3}) \psi(q^{3})\psi(q^9)X(q^6)\nonumber\\
&+2q^2\varphi(q^{3}) \psi(q^{3})\psi(q^{18})X(q^3).
\end{align}
From Theorem \ref{thm1}, the sum  $3p_3+3p_4+p_5+p_8$  is universal.  Thus \eqref{Q19}
yields the universality of the sums $p_3+p_4+p_5+2p_5,$ $p_3+3p_3+p_4+2p_5$ and $p_3+6p_3+p_4+p_5.$
In a similar way, the universality of the sum  $3p_3+6p_3+p_5+2p_5$ (by Theorem \ref{thm1}), with the help of
\eqref{X1X2}, implies the universality of the sums  $p_3+2p_3+3p_4+p_5,$  $p_3+2p_3+p_5+p_8$  and $ p_3+2p_3+3p_3+p_8.$ This completes the proof of the theorem.
\end{proof}
\begin{theorem}\label{thm3} The following sums
\begin{center}
\begin{tabular}{ccc}
$2p_5+2p_5+p_8+p_8,$&  $p_8+p_8+p_8+p_8,$&$ 3p_4+3p_4+p_8+p_8,$\\
$6p_3+3p_4+2p_5+p_8,$& $3p_3+p_5+2p_5+2p_5,$&$3p_3+p_5+p_8+p_8,$\\
$3p_4+p_5+p_5+p_8,$ & $6p_3+p_5+p_5+2p_5,$& $p_5+p_5+p_5+4p_5,$\\
$p_5+p_5+p_5+2p_8,$& $p_3+p_3+2p_4+3p_5,$ & $p_3+p_3+4p_3+3p_5,$\\
 $p_4+2p_5+3p_5+4p_5,$& $p_4+2p_5+3p_5+2p_8,$&$ 2p_5+2p_5+2p_5+p_8,$\\
 $ 2p_5+p_8+p_8+p_8,$&&\\
\end{tabular}
\end{center}
are universal over $\mathbb{Z}$.
\end{theorem}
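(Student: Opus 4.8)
The plan is to run the same machine that proves Theorems \ref{thm1} and \ref{thm2}. First I translate each target quaternary form into a product of four of Ramanujan's theta functions through the dictionary $ap_3\leftrightarrow\psi(q^a)$, $ap_4\leftrightarrow\varphi(q^a)$, $ap_5\leftrightarrow X(q^a)$, $ap_8\leftrightarrow Y(q^a)$, where the coefficient of a polygonal summand is read off from $(g+h,g-h)$ as in Corollary \ref{coro1.1}. Then, for each pair (or triple) of the listed sums, I take one two-variable identity from Section \ref{Preliminary} and multiply it by a suitable monomial in theta functions, arranging that the left-hand product is the generating function of a sum \emph{already known} to be universal (from Theorems \ref{thm1}, \ref{thm2}, from Sun and Ju--Oh, or from a sum proved earlier in the same chain), while the right-hand terms are exactly the sums I want. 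The one hypothesis I must verify at every step is that the resulting identity is a genuine $k$-dissection in the sense of \eqref{Q1.1}: there must be precisely $k$ terms carrying the consecutive powers $q^{0},\dots,q^{k-1}$ and every theta argument on the right must be divisible by $k$; only then does Corollary \ref{coro1.1} transfer universality from the left-hand seed to each right-hand summand.

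Concretely I would open with the ``pure'' sums. Multiplying \eqref{varphi3Y1} by $Y^2(q^2)$ gives $\varphi(q^3)Y(q)Y^2(q^2)=X^2(q^4)Y^2(q^2)+qY^4(q^2)$, whose left side is the Theorem \ref{thm2} sum $3p_4+p_8+2p_8+2p_8$, so Corollary \ref{coro1.1} yields $2p_5+2p_5+p_8+p_8$ and $p_8+p_8+p_8+p_8$. Multiplying \eqref{Y2} by $\varphi(q^6)Y(q^2)$ gives $\varphi(q^6)Y(q^2)Y^2(q)=\varphi^2(q^6)Y^2(q^2)+2q\varphi(q^6)\psi(q^{12})X(q^4)Y(q^2)$, whose left side is the Theorem \ref{thm2} sum $6p_4+p_8+p_8+2p_8$, producing $3p_4+3p_4+p_8+p_8$ and $6p_3+3p_4+2p_5+p_8$. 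These then feed the next round: applying \eqref{varphi3Y1} to the seed $6p_3+3p_4+2p_5+p_8=\psi(q^6)X(q^2)\varphi(q^3)Y(q)$ gives $3p_3+p_5+2p_5+2p_5$ and $3p_3+p_5+p_8+p_8$; applying \eqref{Y2} to the $Y^2(q)$ inside $2p_5+2p_5+p_8+p_8=X^2(q^2)Y^2(q)$ gives $3p_4+p_5+p_5+p_8$ and $6p_3+p_5+p_5+2p_5$; and applying the dissection \eqref{Y} to $2p_5+2p_5+2p_5+p_8=X^3(q^2)Y(q)$ gives $p_5+p_5+p_5+4p_5$ and $p_5+p_5+p_5+2p_8$. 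The $\psi^2$-type pair $p_3+p_3+2p_4+3p_5$ and $p_3+p_3+4p_3+3p_5$ comes from \eqref{psi2} applied to a seed of the shape $\psi^2(q)\psi(q^2)X(q^6)$, and the remaining sums $2p_5+2p_5+2p_5+p_8$, $2p_5+p_8+p_8+p_8$, $p_4+2p_5+3p_5+4p_5$, $p_4+2p_5+3p_5+2p_8$ are handled by the same template.

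Two supporting devices are needed throughout. First, to recognise a left-hand product as a sum already in hand I would freely invoke the equivalences of Lemma \ref{SunLemma} together with \eqref{eqv1}, \eqref{eqv2}, \eqref{WuSunequiv} and $p_3\sim p_6$ from \eqref{2.8.2}; for instance the seed $\varphi(q^3)Y(q)X(q^4)Y(q^2)=3p_4+4p_5+p_8+2p_8$ required for $2p_5+2p_5+2p_5+p_8$ and $2p_5+p_8+p_8+p_8$ is, via \eqref{SunLemma3} applied to its $4p_5+p_8$ block, equivalent to a generalized $3$-$4$-$8$ form handled elsewhere, and such bridges close the dependency chain. Second, the sums forcing an odd scale in the natural reading -- those carrying $\varphi(q^3)$ or $X(q^3)$, such as $p_4+2p_5+3p_5+4p_5$ -- cannot be reached by the modulus-$2$ identities and instead require the genuine $3$-dissections \eqref{Y1Y2}, \eqref{X1X2}, \eqref{X1Y1}, multiplied into products whose left sides are the appropriate pentagonal seeds.

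I expect the main obstacle to be organisational rather than analytic. For each of the sixteen sums one must simultaneously select the right Section \ref{Preliminary} identity, the right auxiliary monomial, and (often) the right equivalence, and then confirm the $k$-dissection hypothesis -- that the number of terms equals the common modulus and that all right-hand arguments are divisible by it -- so that Corollary \ref{coro1.1} genuinely applies. Because several of these sums serve as seeds for others (and a few seeds live in other theorems of the paper), the delicate point is to arrange the web of dependencies so that no argument is circular; once the bookkeeping is fixed, the underlying theta-function identities are all routine expansions of \eqref{RamIden} and \eqref{RamIden2}.
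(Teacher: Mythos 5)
Your proposal is essentially the paper's own proof: the same dictionary, the same transfer principle (Corollary \ref{coro1.1}), and for most of the sixteen sums the very same seeds and two-term dissections. Indeed your seed $6p_3+3p_4+2p_5+p_8=\psi(q^6)X(q^2)\varphi(q^3)Y(q)$ for the pair $3p_3+p_5+2p_5+2p_5$, $3p_3+p_5+p_8+p_8$ is the one that actually makes \eqref{varphi3Y1} dissect with even exponents (the paper's text names $3p_4+3p_4+p_8+p_8=\varphi^2(q^3)Y^2(q)$ at that step, which does not split into two terms with arguments divisible by $2$), and your \eqref{psi2}-route to $p_3+p_3+2p_4+3p_5$ and $p_3+p_3+4p_3+3p_5$ from the seed $p_3+p_3+2p_3+6p_5=\psi^2(q)\psi(q^2)X(q^6)$ is a harmless variant of the paper's \eqref{varphi}-route from the equivalent seed $2p_3+2p_3+p_4+6p_5$.

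Two local corrections are needed before the plan closes. First, your claim that $p_4+2p_5+3p_5+4p_5$ forces an odd scale and "cannot be reached by the modulus-$2$ identities" is false: with $k=2$ the summand $3p_5$ is encoded by $X(q^6)=f\left((q^2)^3,(q^2)^6\right)$, whose exponents are even, and the paper obtains $p_4+2p_5+3p_5+4p_5$ and $p_4+2p_5+3p_5+2p_8$ precisely by the $2$-dissection \eqref{Y} applied to the seed $2p_4+4p_5+6p_5+p_8=\varphi(q^2)X(q^4)X(q^6)Y(q)$, reached from $2p_3+2p_3+p_4+6p_5$ via \eqref{eqv1} and \eqref{SunLemma3}; the $3$-dissections \eqref{Y1Y2}, \eqref{X1X2}, \eqref{X1Y1} play no role in this theorem. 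Second, for the seed $3p_4+4p_5+p_8+2p_8$ your bridge via \eqref{SunLemma3} lands on $4p_3+p_4+3p_4+2p_8$, whose universality is nowhere established independently of this very chain, so that bridge is circular as stated; the non-circular bridge is \eqref{eqv2} applied to the $6p_3+2p_5$ block of $6p_3+3p_4+2p_5+p_8$, which is proved earlier in the same theorem. With these two repairs your outline reproduces the paper's argument.
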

\begin{proof}Using \eqref{varphi3Y1}, we obtain
\begin{equation}\label{Q20}
  \varphi(q^3)Y(q)Y^2(q^2)=X^2(q^4)Y^2(q^2)+qY^4(q^2).
\end{equation}
Since $3p_4+p_8+2p_8+2p_8$ is universal.  Thus \eqref{Q20}
yields the universality of the sums $2p_5+2p_5+p_8+p_8$ and $p_8+p_8+p_8+p_8.$
In a similar way, the universality $6p_4+p_8+p_8+2p_8$  with the help of \eqref{Y2} implies that, the sums
$3p_4+3p_4+p_8+p_8$ and $6p_3+3p_4+2p_5+p_8$ are universal. As $3p_4+3p_4+p_8+p_8$ is universal, we use \eqref{varphi3Y1}, to determine the
the universality of the sums $3p_3+p_5+2p_5+2p_5$ and $3p_3+p_5+p_8+p_8$. Similarly, since $2p_5+2p_5+p_8+p_8$
is universal.  Thus the identity \eqref{Y2} implies the universality of the sums $3p_4+p_5+p_5+p_8$ and $6p_3+p_5+p_5+2p_5$. Now,
since $3p_3+p_5+2p_5+2p_5 \sim 2p_5+2p_5+2p_5+p_8$ so that the sum $2p_5+2p_5+2p_5+p_8$ is also universal. Thus using \eqref{Y}, we deduce  that the sums
$p_5+p_5+p_5+4p_5$ and $p_5+p_5+p_5+2p_8$ are  universal over $\mathbb{Z}$.  Using \eqref{SunLemma1}, \eqref{WuSunequiv}, \eqref{SunLemma2} and \eqref{eqv1}, respectively, we find that
\begin{align*}
3p_3+3p_4+p_5+p_8 & \sim p_5+3p_5+6p_5+p_8  \sim p_3+p_5+6p_5+p_8\\
& \sim p_3+p_3+2p_3+6p_5\sim 2p_3+2p_3+p_4+6p_5. \end{align*}
Since  $3p_3+3p_4+p_5+p_8 $ is universal (by Theorem~\ref{thm1}), it follows that   $2p_3+2p_3+p_4+6p_5$ is also universal.
So that, the identity \eqref{varphi} yields the universality of the sums $p_3+p_3+2p_4+3p_5$ and $p_3+p_3+4p_3+3p_5$.
As $2p_3+2p_3+p_4+6p_5$ is universal and  $2p_3+2p_3+p_4+6p_5\sim 4p_3+p_4+2p_4+6p_5 \sim 2p_4+4p_5+6p_5+p_8,$  we conclude that, the sum
 $2p_4+4p_5+6p_5+p_8$ is universal and so \eqref{Y} yields the universality of the sums $p_4+2p_5+3p_5+4p_5$ and $p_4+2p_5+3p_5+2p_8$. Since
 $ 6p_3+3p_4+2p_5+p_8 \sim 3p_4+4p_5+p_8+2p_8$ and $ 6p_3+3p_4+2p_5+p_8 $ is universal, this means that the sum $3p_4+4p_5+p_8+2p_8$ is also universal.
 Thus using \eqref{varphi3Y1}, we deduce  that the sums $2p_5+2p_5+2p_5+p_8$ and $2p_5+p_8+p_8+p_8$ are  universal over $\mathbb{Z}$.  This completes the proof of the theorem.
\end{proof}
There are many  universal sums equivalent to those which we have determined in the previous theorems. In the following theorem, we choose some of them, where we use  \eqref{SunLemma1}--\eqref{WuSunequiv} to deduce and we omit the details.
\begin{theorem}\label{thm4} We have
\begin{align*}
&2p_5+4p_5+p_8+p_8 \sim 4p_3+p_4+2p_5+p_8 \sim 3p_3+4p_3+p_4+p_5\\
& \sim  2p_3+2p_4+p_8+p_8 \sim 3p_3+p_5+4p_5+p_8 \sim p_3+2p_3+3p_3+4p_5, \\
&3p_4+p_5+2p_5+p_8 \sim p_3+2p_3+3p_4+2p_5  \sim p_3+3p_4+2p_5+6p_5 \\
& \sim 3p_3+3p_4+p_5+p_5 \sim p_5+p_5+3p_5+6p_5  \sim p_3+p_5+p_5+6p_5,\\
& p_5+4p_5+p_8+p_8 \sim p_3+2p_3+4p_5+p_8 \sim  p_3+2p_3+4p_3+p_4\\
& \sim  2p_3+4p_3+p_5+2p_5  \sim p_5+2p_5+2p_5+2p_8 \sim p_3+p_4+2p_5+2p_8\\
 & \sim 4p_3+p_5+2p_5+6p_5 \sim p_3+4p_3+p_4+6p_5\sim p_3+4p_5+6p_5+p_8,\\
& 6p_3+p_5+2p_5+2p_5   \sim p_5+2p_5+4p_5+2p_8 \sim p_3+p_4+4p_5+2p_8 &\\
& \sim 2p_3+2p_4+p_5+2p_8 \sim 2p_3+4p_3+p_5+4p_5 \sim 2p_3+p_5+4p_5+12p_5,\\
&6p_3+p_5+2p_5+p_8 \sim p_3+2p_3+6p_3+2p_5 \sim p_3+6p_3+2p_5+6p_5\\
&\sim p_3+2p_5+6p_5+18p_5 \sim p_3+2p_3+4p_5+2p_8 \sim p_5+4p_5+p_8+2p_8\\
& \sim 4p_3+p_4+p_5+2p_8,\\
& 2p_5+4p_5+p_8+2p_8 \sim 2p_3+2p_4+p_8+2p_8 \sim  2p_3+4p_3+4p_5+p_8\\
& \sim 4p_3+p_4+2p_5+2p_8 \sim 2p_3+4p_5+12p_5+p_8\\
& 3p_3+3p_4+p_5+p_8  \sim p_5+3p_5+6p_5+p_8  \sim p_3+p_5+6p_5+p_8 \\
&\sim p_3+p_3+2p_3+6p_5 \sim 2p_3+2p_3+p_4+6p_5\sim 4p_3+p_4+2p_4+6p_5\\
&\sim 2p_4+4p_5+6p_5+p_8,\end{align*}
\begin{align*}
& 2p_5+2p_5+p_8+p_8  \sim 3p_3+p_5+2p_5+p_8 \sim 3p_3+3p_3+p_5+p_5\\
& \sim 6p_3+3p_4+p_5+p_5 \sim p_3+3p_3+p_4+p_8 \sim p_3+2p_3+3p_3+2p_5\\
& \sim p_3+3p_3+2p_5+6p_5,\\
& 3p_3+6p_3+p_5+2p_5 \sim p_5+2p_5+3p_5+3p_8   \sim p_3+p_4+3p_5+3p_8,\\
& 6p_3+3p_4+2p_5+p_8 \sim 3p_4+4p_5+p_8+2p_8 \sim 4p_3+p_4+3p_4+2p_8 ,\\
& 4p_5+p_8+2p_8+3p_8  \sim 4p_3+p_4+2p_8+3p_8,\\
& 4p_5+p_8+p_8+2p_8  \sim 4p_3+p_4+p_8+2p_8,\\
& 2p_5+p_8+2p_8+2p_8  \sim 2p_3+4p_3+p_8+2p_8,\\
& p_5+2p_5+4p_5+p_8  \sim 4p_3+p_4+p_5+2p_5,\\
& p_5+p_8+p_8+2p_8  \sim p_3+2p_3+p_8+2p_8,\\
& 4p_5+p_8+2p_8+5p_8  \sim 4p_3+p_4+2p_8+5p_8,\\
& 4p_5+p_8+p_8+3p_8  \sim 4p_3+p_4+p_8+3p_8,\\
& 4p_5+p_8+p_8+p_8 \sim 4p_3+p_4+p_8+p_8,\\
& 2p_5+p_8+p_8+2p_8  \sim 2p_3+4p_3+p_8+p_8, \\
& 3p_3+p_5+p_8+2p_8  \sim p_3+2p_3+3p_3+2p_8,\\
& 4p_5+p_8+2p_8+2p_8  \sim 4p_3+p_4+2p_8+2p_8,\\
& 6p_3+p_5+p_5+2p_5 \sim p_5+p_5+4p_5+2p_8,\\
& p_3+2p_5+4p_5+6p_5 \sim  p_3+2p_3+2p_4+6p_5,\\
& p_5+4p_5+p_8+p_8  \sim 4p_3+p_4+p_5+p_8,\\
& 4p_5+p_8+2p_8+6p_8  \sim 4p_3+p_4+2p_8+6p_8,\\
& p_3+6p_3+p_4+p_5  \sim 6p_3+p_4+p_5+3p_5.
\end{align*}
All are universal over $\mathbb{Z}$.
\end{theorem}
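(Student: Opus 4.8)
The plan is to read each displayed relation $X_1\sim X_2\sim\cdots\sim X_m$ not as a fresh assertion to be verified from the definitions, but as a finite walk through the space of quaternary forms whose individual steps are the elementary equivalences \eqref{SunLemma1}--\eqref{WuSunequiv}, with the walk anchored at one member already certified universal in Theorems \ref{thm1}--\ref{thm3}. Thus the first thing I would do is record why universality propagates along such a walk at all, so that establishing one equivalent form and one universal seed suffices for the whole chain.

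The structural fact I would isolate is that $\sim$ is a congruence for the independent (direct) sum of forms and is stable under positive scaling. Concretely, if $A(\mathbf{x})\sim A'(\mathbf{x})$ and $B(\mathbf{y})$ involves a disjoint set of variables, then $A+B\sim A'+B$, since the image of a sum over independent variables is the sumset of the images and replacing $A$ by $A'$ does not change that sumset; likewise $A\sim A'$ forces $cA\sim cA'$ for $c\in\mathbb{Z}^+$, as scaling a form merely scales its image. Combined with the symmetries \eqref{sym} and \eqref{2.8.2} and with the fact that universality is precisely the statement that the image equals $\mathbb{N}_0$, this yields the crucial invariance: \emph{every} member of a chain is universal as soon as one member is. I would state this once, explicitly, and then invoke it throughout.

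With the invariance in hand, the remaining work is mechanical: for each chain, locate a seed already proved universal in the earlier theorems, and exhibit a sequence of local substitutions connecting it to every other listed form. Each substitution replaces one or two summands using an atomic move drawn from \eqref{SunLemma1} ($p_4+p_3\sim p_5+2p_5$), \eqref{SunLemma2} ($p_3+2p_3\sim p_5+p_8$), \eqref{SunLemma3} ($p_4+4p_3\sim 4p_5+p_8$), \eqref{eqv1}, \eqref{eqv2} ($2p_5+p_8\sim 3p_3+p_5$), or \eqref{WuSunequiv}, all ultimately consequences of \eqref{2.10P1ab}. For instance, in the first chain one rewrites $4p_5+p_8\sim p_4+4p_3$ by \eqref{SunLemma3} to pass from $2p_5+4p_5+p_8+p_8$ to $4p_3+p_4+2p_5+p_8$, then applies \eqref{eqv2} to the pair $2p_5+p_8$ to reach $3p_3+4p_3+p_4+p_5$, while the alternative route $2(p_5+2p_5)\sim 2(p_4+p_3)$ via the scaled form of \eqref{SunLemma1} lands on the seed $2p_3+2p_4+p_8+p_8$ already treated in Theorem \ref{thm1}. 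Every other chain is handled by the same kind of pair-by-pair rewriting.

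I expect the only real difficulty to be bookkeeping rather than mathematics: for each chain one must choose which atomic move to apply to which pair of summands, track the scaling factors so that coefficients match across the step, and verify that the seed genuinely appears among the forms shown universal in Theorems \ref{thm1}--\ref{thm3}. Since these are routine, finite verifications with no new number-theoretic content beyond the congruence principle above, I would present the invariance argument in full and then, exactly as the surrounding text indicates, omit the per-chain substitution details.
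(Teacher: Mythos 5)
Your proposal is correct and takes essentially the same route as the paper: the paper likewise treats each chain as a sequence of summand-by-summand rewritings using the equivalences \eqref{SunLemma1}--\eqref{WuSunequiv}, anchored at a sum already proved universal in Theorems \ref{thm1}--\ref{thm3}, and explicitly omits the per-chain details. Your explicit justification that $\sim$ is a congruence for disjoint sums and positive scaling is a point the paper leaves implicit, but it is not a different method.
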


\ni 	\textbf{\large{Acknowledgements:}}\\
The work of the  third-named author (M. P. Chaudhary) was funded by the  National Board of Higher Mathematics (NBHM) of the Department of Atomic Energy (DAE) of the  Government of India by its sanction letter (Ref. No. 02011/12/ 2020 NBHM (R. P.)/R D II/7867) dated 19 October 2020.

\end{document}